\author{Carlos Fresneda-Portillo}
\theoremstyle{plain}
\newtheorem{theorem}{Theorem}[section]
\newtheorem{lemma}[theorem]{Lemma}
\newtheorem{corollary}[theorem]{Corollary}
\newtheorem{cond}[theorem]{Condition}
\theoremstyle{definition}
\theoremstyle{remark}
\newtheorem{remark}{Remark}[section]
\newcommand{\Div}[1]{\text{div}\,}
\newcommand\Label[1]{&\refstepcounter{equation}(\theequation)\ltx@label{#1}&}
\numberwithin{equation}{section}
\begin{document}

\title{Boundary-Domain Integral Equations for the Mixed Problem for the Diffusion Equation in Inhomogeneous Media based on a New Family of Parametrices on Unbounded Domains}

\author{C. Fresneda-Portillo}

\maketitle

\abstract*{A system of Boundary-Domain Integral Equations is derived from the mixed (Dirichlet-Neumann) boundary value problem for the diffusion equation in inhomogeneous media defined on an unbounded domain. This paper extends the work introduced in \cite{carloscomp} to unbounded domains. 
Mapping properties of parametrix-based potentials on weighted Sobolev spaces are analysed. Equivalence between the original boundary value problem and the system of BDIEs is shown. Uniqueness of solution of the BDIEs is proved using Fredholm Alternative and compactness arguments adapted to weigthed Sobolev spaces.}

\section{Introduction}
Boundary Domain Integral Equations appear naturally when applying the Boundary Integral Method to boundary value problems with variable coefficient. These class of boundary value problems has a wide range of applications in Physics or Engineering, such as, heat transfer in non-homogeneous media \cite{ravnik}, motion of laminar fluids with variable viscosity \cite{green2}, or even in the acoustic scattering by inhomogeneous anisotropic obstacle \cite{acoustics}. 

The popularity of the Boundary Integral Method is due to the reduction of the discretisation domain. For example, if the boundary value problem (BVP) is defined on a three dimensional domain, then, the boundary integral method reduces the BVP to an equivalent system of boundary integral equations (BIEs) defined only on the \textit{boundary} of the domain. However, this requires an explicit fundamental solution of the partial differential equation appearing in the BVP. Although these fundamental solutions may exist, they might not always be available explicitly for PDEs with variable coefficients. To overcome this obstacle, one can construct a \textit{parametrix} using the known fundamental solution. A discussion on fundamental solution existence theorems, algorithms for constructing fundamental solutions and parametrices is available in \cite{pomp}; for classical examples of derivation of  Boundary Domain Integral Equations refer to \cite{mikhailov1} for the diffusion equation with variable coefficient in bounded domains in $\mathbb{R}^{3}$; \cite{carloscomp} for the same problem applying a different parametrix; \cite{carloszenebe} for the Dirichlet problem in $\mathbb{R}^{2}$ and \cite{carlosstokes} for the mixed problem for the compressible Stokes system, as an example of derivation of BDIEs from a PDE system. 

The introduction of a parametrix for BVPs with variable coefficient lead to a system of integral equations not only defined on the boundary but also in the domain.  Still, one can transform domain integrals into boundary integrals applying the methods shown in \cite{RIM}. These methods help to preserve the reduction of dimension while also remove singularities appearing in the domain integrals. 

The approximation of numerical solutions of BDIEs is a relevant problem nowadays. In particular, the very recent article \cite{numerics2d} focuses on the solution of the analogous mixed BVP presented in this paper in $\mathbb{R}^{2}$. In \cite{2dnumerics}, the authors show that it is possible to obtain linear convergence with respect to the number of quadrature curves, and in some cases, exponential convergence. Analogous research in 3D shows the successful implementation of fast algorithms to obtain the solution of boundary domain integral equations, see \cite{ravnik, numerics, sladek}. Furthermore, the authors \cite{chapko} show the application of the Boundary Domain Integral Equation Method to the study of inverse problems with variable coefficients. 

A parametrix is not unique, see discussion on \cite[Section 1]{carloscomp}. The study of different  parametrices is adventageous to construct parametrices for PDE systems. Moreover, numerical methods may work with one parametrix more efficiently than with another. However, before attempting numerical experiments, results on the existence and uniqueness of solution need to be established what is the purpose of this paper.  

\textit{In this paper,} we extend the results presented in \cite{carloscomp} to unbounded domains which employ a different parametrix from the one used in \cite{exterior}. 

In unbounded domains, the mixed problem is set in weighted Sobolev spaces to allow constant functions in unbounded domains to be possible solutions of the problem. Hence, all the mapping properties of the parametrix based potential operators are shown in weighted Sobolev spaces. 

An analysis of the uniqueness of the BDIES is performed by studying the Fredholm properties of the matrix operator which defines the system. Unlike for the case of bounded domains, the Rellich compactness embeding theorem is not available for Sobolev spaces defined over unbounded domains. Nevertheless, we present a lemma to reduce the remainder operator to two operators: one invertible and one compact. Therefore, we can still benefit from the Fredholm Alternative theory to prove uniqueness of the solution.

%

\section{Weighted Sobolev spaces}
Let $\Omega=\Omega^{+}$ be an unbounded exterior connected domain. Let $\Omega^{-}:=\mathbb{R}^{3}\smallsetminus \overline{\Omega}^{+}$ the complementary (bounded) subset of $\Omega$. The boundary $S := \partial\Omega$ is simply connected, closed and infinitely differentiable, $ S\in \mathcal{C}^{\infty}$. Furthermore, $S :=\overline{S}_{N}\cup \overline{S}_{D}$ where both $S_{N}$ and $S_{D}$ are non-empty, connected disjoint submanifolds of $S$. The border of these two submanifolds is also infinitely differentiable: $\partial S_{N}= \partial S_{D}\in\mathcal{C}^{\infty}$. 

With regards to function spaces that we employ on this paper, $\mathcal{D}(\Omega):=C^{\infty}_{comp}(\Omega)$ denotes the space of test functions, and $\mathcal{D}^{*}(\Omega)$ denotes the space of distributions or generalised functions. We also use Sobolev spaces $H^{s}(\Omega)$, Bessel potential spaces on the boundary of the domain 
$H^{s}(\partial\Omega)$, where $s\in\mathbb{R}$ (see e.g. \cite{mclean, hsiao} for more details). We recall that $H^{s}$ coincide with the Sobolev-Slobodetski spaces $W^{2,s}$ for any non-negative $s$. We denote by $\widetilde{H}^{s}(\Omega)$ 
the subspace of ${H}^{s}(\mathbb{R}^{3})$, $\widetilde{H}^{s}(\Omega):=\{g:g\in
H^{s}(\mathbb{R}^{3}),~\textrm{supp}~g\subset\overline{\Omega}\}$. Note that the space $H^{s}(\Omega)$ is characterised as all distributions from $H^{s}(\mathbb{R}^{3})$ restricted to $\Omega$, $H^{s}(\Omega)=\{r_{_{\Omega}}g:g\in
H^{s}(\mathbb{R}^{3})\}$ where $r_{_{\Omega}}$ denotes the restriction operator on $\Omega.$ 

To ensure uniquely solvability of the BVPs in exterior domains, we will use \textit{weighted Sobolev spaces} with weight $\omega(x) = (1+\vert x\vert^{2})^{1/2}$, (see e.g., \cite{exterior}). Let
\begin{center}
$L^{2}(\omega^{-1}; \Omega) = \lbrace g: \omega^{-1}g\in L^{2}(\Omega)\rbrace,$
\end{center}
be the weighted Lebesgue space and $\mathcal{H}^{1}(\Omega)$ the following weighted Sobolev (Beppo-Levi) space constructed using the $L^{2}(\omega^{-1}; \Omega)$ space
\begin{center}
$\mathcal{H}^{1}(\Omega):=\lbrace g\in L^{2}(\omega^{-1}; \Omega): \nabla g\in L^{2}(\Omega)\rbrace$ 
\end{center} endowed with the corresponding norm
\begin{center}
$\parallel g \parallel^{2}_{\mathcal{H}^{1}(\Omega)} := \parallel \omega ^{-1}g\parallel^{2}_{L^{2}(\Omega)} + \parallel \nabla g \parallel ^{2}_{L^{2}(\Omega)}.$
\end{center}

Taking into account that $\mathcal{D}(\overline{\Omega})$ is dense in $H^{1}(\Omega)$ it is easy to prove that $\mathcal{D}(\overline{\Omega})$ is dense in ${\mathcal{H}}^{1}(\Omega)$. For further details, cf. \cite[p.3]{exterior} and more references therein. 

If $\Omega$ is unbounded, then the seminorm
\begin{center}
$\vert {g}\vert_{{\mathcal{H}}^{1}(\Omega)}:= \parallel \nabla {g}\parallel_{{L}^{2}(\Omega)},$
\end{center}
is equivalent to the norm $\parallel {g} \parallel_{{\mathcal{H}}^{1}(\Omega)}$ in ${\mathcal{H}}^{1}(\Omega)$ \cite[ Chapter XI, Part B, \S 1]{lions}. On the contrary, if $\Omega^{-}$ is bounded, then ${\mathcal{H}}^{1}(\Omega^{-})={H}^{1}(\Omega^{-})$. If $\Omega'$ is a bounded subdomain of an unbounded domain $\Omega$ and ${g}\in {\mathcal{H}}^{1}(\Omega)$, then ${g}\in {H}^{1}(\Omega')$. 

Let us introduce $\widetilde{{\mathcal{H}}}^{1}(\Omega)$ as the completion of ${\mathcal{D}}(\Omega)$ in ${\mathcal{H}}^{1}(\mathbb{R}^{3})$; let $\widetilde{{\mathcal{H}}}^{-1}(\Omega) := [{\mathcal{H}}^{1}(\Omega)]^{*}$ and ${\mathcal{H}}^{-1}(\Omega) := [ \widetilde{{\mathcal{H}}}^{1}(\Omega)]^{*}$ be the corresponding dual spaces. Evidently, the space ${L}^{2}(\omega; \Omega)\subset {\mathcal{H}}^{-1}(\Omega)$. 

For any generalised function ${g}$ in ${\widetilde{{\mathcal{H}}}}^{-1}(\Omega)$, we have the following representation property, see \cite[Section 2]{exterior}, $g_{j}=\partial_{i} g_{ij} + g_{j}^{0}, \quad g_{ij}\in L^{2}(\mathbb{R}^{3})$ and are zero outside the domain $\Omega$, whereas $g_{j}^{0}\in L^{2}(\omega;\Omega)$. Consequently, ${\mathcal{D}}(\Omega)$ is dense in ${\widetilde{{\mathcal{H}}}}^{-1}(\Omega) $ and ${\mathcal{D}}(\mathbb{R}^{3})$ is dense in ${{\mathcal{H}}}^{-1}(\mathbb{R}^{3})$.

\section{Traces, conormal derivatives and Green identities}

We consider the following differential operator
\begin{equation}\label{ch5operatorA}
\mathcal{A}u(x):=\sum_{i=1}^{3}\dfrac{\partial}{\partial x_{i}}\left(a(x)\dfrac{\partial u(x)}{\partial x_{i}}\right)\in \Omega,
\end{equation}
 where $a(x)\in \mathcal{C}^{2}$, $a(x)>0$, is a variable coefficient. It is easy to see that if $a\equiv 1$ then, the operator $\mathcal{A}$ becomes the Laplace operator $\Delta$.
 
Here and thereafter, we will assume the following condition on the coefficient $a(x)$.
\begin{cond}\label{cond0}
The coefficient $a(x)$ belongs to the space $L^{\infty}(\Omega)$. Furthermore, there exist two positive constants, $C_{1}$ and $C_{2}$, such that:
\begin{equation}\label{ch5conda(x)1}
0 < C_{1} < a(x) < C_{2}.
\end{equation}
\end{cond}
 
The Condition \ref{cond0} is necessary so that the operator $\mathcal{A}$ acting on $u\in \mathcal{H}^{1}(\Omega)$ is well defined in the weak sense. Hence, we define the operator $\mathcal{A}$ in the weak sense as
\begin{equation}\label{ch5exbilinearA}
\langle \mathcal{A}u, v \rangle := - \langle a\nabla u, \nabla v\rangle =-\mathcal{E}(u,v) \quad \forall v \in \mathcal{D}(\Omega),
\end{equation}
where
\begin{equation}\label{ch5exE}
\mathcal{E}(u,v) : = \int_{\Omega} E(u,v) (x) dx, \quad\quad E(u,v)(x):= a(x) \nabla u(x)\nabla v(x). 
\end{equation}

Note that the functional $\mathcal{E}(u,v):\mathcal{H}^{1}(\Omega)\times \widetilde{\mathcal{H}}^{1}(\Omega)\longrightarrow \mathbb{R}$ is continuous under Condition \ref{cond0}. Therefore, the density  of $\mathcal{D}(\Omega)$ in $\widetilde{\mathcal{H}}^{1}(\Omega)$ implies the continuity of the operator  $\mathcal{A}:\mathcal{H}^{1}(\Omega)\longrightarrow\mathcal{H}^{-1}(\Omega)$ in \eqref{ch5exbilinearA} which gives the weak form of the operator $\mathcal{A}$.
 
For a scalar function $w\in H^{1}(\Omega)$ in virtue of the trace theorem it follows that $\gamma^{\pm}w\in H^{1/2}(S)$ where the trace operators from $\Omega^{\pm}$ to $S$ are denoted by $\gamma^{\pm}$ respectively. Consequently, if $w\in H^{1}(\Omega)$, then $w\in \mathcal{H}^{1}(\Omega)$ and it follows that $\gamma^{\pm}w\in H^{1/2}(S)$, (see, e.g., \cite{mclean, traces}). 
For $u\in H^{s}(\Omega); s>3/2$, we can define by $T^{\pm}$ the  conormal derivative operator acting on $S$ understood in the classical sense:
\begin{equation}\label{ch5conormal}
T^{\pm}[u(x)] :=\sum_{i=1}^{3}a(x)n_{i}(x)\gamma^{\pm}\left( \dfrac{\partial u}{\partial x_{i}}\right)=a(x)\gamma^{\pm}\left( \dfrac{\partial u(x)}{\partial n(x)}\right),
\end{equation}
\noindent where $n(x)$ is the exterior unit normal vector to the domain $\Omega$ at a point $x\in S$.

However, for $u\in \mathcal{H}^{1}(\Omega)$ (as well as for $u\in H^{1}(\Omega)$), the classical co-normal derivative operator may not exist on the trace sense. This issue is overcome by introducing the following function space for the operator $\mathcal{A}$, (cf. \cite{exterior})
\begin{equation}\label{ch5spaceHA}
\mathcal{H}^{1,0}(\Omega;\mathcal{A}) := \lbrace g\in \mathcal{H}^{1}(\Omega): \mathcal{A}g\in L^{2}(\omega; \Omega)\rbrace
\end{equation}
endowed with the norm
\begin{center}
$\parallel g\parallel ^{2}_{\mathcal{H}^{1,0}(\Omega;\mathcal{A})} := \parallel g \parallel ^{2}_{\mathcal{H}^{1}(\Omega)} + \parallel \omega \mathcal{A}g\parallel^{2}_{L^{2}(\Omega)}.$
\end{center}

Now, if a distribution $u\in \mathcal{H}^{1,0}(\Omega; \mathcal{A})$, we can appropriately define the conormal derivative $T^{+}u\in H^{-1/2}(S)$ using the Green's formula, cf. \cite{mclean, exterior},
\begin{equation}\label{ch5green1}
\langle T^{+}u, w\rangle_{S}:= \pm \int_{\Omega^{\pm}}[(\gamma^{+}_{-1}\omega)\mathcal{A}u +E(u,\gamma_{-1}^{+}w)]\,\,dx,\,\, \text{for all}\,\, w\in H^{1/2}(S),
\end{equation}
where $\gamma_{-1}^{+}: H^{1/2}(S)\rightarrow \mathcal{H}^{1}(\Omega)$ is a continuous right inverse to the trace operator $\gamma^{+}:\mathcal{H}^{1}(\Omega) \longrightarrow H^{1/2}(S)$ while the brackets $\langle u,v \rangle_{S}$ represent the duality brackets of the spaces $H^{1/2}(S)$ and $H^{-1/2}(S)$ which coincide with the scalar product in $L^{2}(S)$ when $u,v\in L^{2}(S)$.

The operator $T^{+}:\mathcal{H}^{1,0}(\Omega; \mathcal{A})\longrightarrow H^{-1/2}(S)$ is bounded and gives a continuous extension on $\mathcal{H}^{1,0}(\Omega; \mathcal{A})$ of the classical co-normal derivative operator \eqref{ch5conormal}. We remark that when $a\equiv 1$, the operator $T^{+}$ becomes the continuous extension on $\mathcal{H}^{1,0}(\Omega; \Delta)$ of the classical normal derivative operator $T^{+}_{\Delta}u = \partial_{n} u:= n\cdot \nabla u$. 

In a similar manner as in the proof \cite[Lemma 4.3]{mclean} or \cite[Lemma 3.2]{costabel}, the first Green identity holds for a distribution $u\in\mathcal{H}^{1,0}(\Omega;\mathcal{A})$
\begin{equation}\label{ch5GF1}
\langle T^{+}u ,\gamma^{+}v\rangle_{S}=\displaystyle\int_{\Omega}[v\mathcal{A}u + E(u,v)] dx, \quad \forall v\in\mathcal{H}^{1}(\Omega).
\end{equation}

Applying the identity \eqref{ch5GF1} to $u,v\in\mathcal{H}^{1,0}(\Omega;\mathcal{A})$, exchanging roles of $u$ and $v$, and then subtracting the one from the other, we arrive to the following second Green identity, see e.g. \cite{mclean}

\begin{equation}\label{ch5secondgreen}
\displaystyle\int_{\Omega}\left[v\mathcal{A}u - u\mathcal{A}v\right]\,dx= \int_{S}\left[\gamma^{+}v\,T^{+}u-\gamma^{+}u\,T^{+}v\right]\,dS(x).
\end{equation}

\section{Boundary Value Problem}
Now that we have shown that if $u \in  \mathcal{H}^{1,0}(\Omega;\mathcal{A})$, then its trace and its conormal derivative are well defined, it is possible to formulate the mixed problem for the operator $\mathcal{A}$ for which we aim to derive an equivalent of system of boundary-domain integral equations (BDIEs).  

\textbf{Mixed problem} Find $u\in \mathcal{H}^{1,0}(\Omega;\mathcal{A})$ such that
\begin{align}
\label{ch5BVP1}\mathcal{A}u&=f,\quad\text{in}\hspace{0.1cm}\Omega;\\
\label{ch5BVPD}r_{S_{D}}\gamma^{+}u&=\phi_{0},\quad\text{on}\hspace{0.1cm} S_{D};\\
\label{ch5BVPN}r_{S_{N}}T^{+}u&=\psi_{0},\quad\text{on} \hspace{0.1cm}S_{N}.
\end{align}
where $f\in L^{2}(\omega,\Omega)$, $\phi_{0}\in H^{1/2}(S_{D})$ and $\psi_{0}\in H^{-1/2}(S_{N})$. 
         
The previous BVP can be represented with the following operator equation ${\mathcal{A}_{M}u =\mathcal{F}_{M},}$ where 
\begin{align*}
\mathcal{A}_{M}:\mathcal{H}^{1,0}(\Omega;\mathcal{A})&\longrightarrow L^{2}(\omega,\Omega)\times H^{1/2}(S_{D})\times H^{-1/2}(S_{N});\\
u &\longrightarrow \mathcal{A}_{M}u : = (\mathcal{A}u, \gamma^{+}u, T^{+}u),
\end{align*}
and $\mathcal{F}_{M}:=(f,\phi_{0},\psi_{0})\in L^{2}(\omega,\Omega)\times H^{1/2}(S_{D})\times H^{-1/2}(S_{N})$. 
The following result is well known and it has been proven \cite[Appendix A]{exterior} by using variational settings and the Lax Milgram lemma. 

\begin{theorem}\label{ch5thinv0}If $a(x)\in L^{\infty}(\Omega)$ and $a(x)>0$, then the mixed problem \eqref{ch5BVP1}-\eqref{ch5BVPN} is uniquely solvable in $\mathcal{H}^{1,0}(\Omega; \mathcal{A})$ and the inverse operator of $\mathcal{A}_{M}$ is continuous 
\begin{align*}
\mathcal{A}_{M}^{-1}&:L^{2}(\omega,\Omega)\times H^{1/2}(S_{D})\times H^{-1/2}(S_{N})\longrightarrow \mathcal{H}^{1,0}(\Omega;\mathcal{A}).\\
\end{align*}
\end{theorem}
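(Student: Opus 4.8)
The plan is to establish unique solvability via the Lax–Milgram lemma applied to the bilinear form $\mathcal{E}(u,v)$ introduced in \eqref{ch5exE}, working in the appropriate closed subspace of $\mathcal{H}^{1}(\Omega)$ that encodes the Dirichlet condition. First I would homogenise the Dirichlet data: using the continuous right inverse $\gamma_{-1}^{+}:H^{1/2}(S)\rightarrow\mathcal{H}^{1}(\Omega)$ of the trace operator, I would extend $\phi_{0}\in H^{1/2}(S_{D})$ to some $\Phi\in H^{1/2}(S)$ and set $u_{0}=\gamma_{-1}^{+}\Phi$, so that the problem reduces to finding $\tilde{u}=u-u_{0}$ with vanishing Dirichlet trace on $S_{D}$. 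The natural solution space is then
\begin{equation*}
\mathcal{H}^{1}_{S_{D}}(\Omega):=\lbrace v\in\mathcal{H}^{1}(\Omega): r_{S_{D}}\gamma^{+}v=0\rbrace,
\end{equation*}
which is a closed subspace of the Hilbert space $\mathcal{H}^{1}(\Omega)$ and hence itself a Hilbert space under the inherited norm.

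Next I would set up the weak formulation. Testing \eqref{ch5BVP1} against $v\in\mathcal{H}^{1}_{S_{D}}(\Omega)$ and invoking the first Green identity \eqref{ch5GF1}, the Neumann datum $\psi_{0}$ on $S_{N}$ and the reaction term from $f$ enter through a bounded linear functional, while the principal part yields the bilinear form $\mathcal{E}(\tilde{u},v)=\langle a\nabla\tilde{u},\nabla v\rangle$. Continuity of $\mathcal{E}$ on $\mathcal{H}^{1}(\Omega)\times\mathcal{H}^{1}(\Omega)$ is immediate from the upper bound $a(x)<C_{2}$ in Condition \ref{cond0} together with Cauchy–Schwarz, and continuity of the right-hand-side functional follows from the trace theorem, the duality pairing on $S_{N}$, and the embedding $L^{2}(\omega;\Omega)\subset\mathcal{H}^{-1}(\Omega)$ noted in Section~2. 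The crucial ingredient is coercivity: using the lower bound $a(x)>C_{1}$ I would write $\mathcal{E}(v,v)\geq C_{1}\Vert\nabla v\Vert_{L^{2}(\Omega)}^{2}=C_{1}\vert v\vert_{\mathcal{H}^{1}(\Omega)}^{2}$, and then appeal to the equivalence of the seminorm $\vert\cdot\vert_{\mathcal{H}^{1}(\Omega)}$ with the full norm $\Vert\cdot\Vert_{\mathcal{H}^{1}(\Omega)}$ on the unbounded domain (the Lions result cited in Section~2). With all three Lax–Milgram hypotheses verified, I obtain a unique $\tilde{u}\in\mathcal{H}^{1}_{S_{D}}(\Omega)$, whence $u=\tilde{u}+u_{0}$ solves the mixed problem.

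The final steps are to upgrade regularity and to identify the inverse operator with its continuity. Since the weak solution satisfies $\mathcal{A}u=f$ with $f\in L^{2}(\omega;\Omega)$, the solution automatically lies in $\mathcal{H}^{1,0}(\Omega;\mathcal{A})$ by the very definition \eqref{ch5spaceHA}, so the conormal derivative $T^{+}u\in H^{-1/2}(S)$ is well defined through \eqref{ch5green1} and the weak formulation shows $r_{S_{N}}T^{+}u=\psi_{0}$; thus $\mathcal{A}_{M}u=\mathcal{F}_{M}$ and $u$ lies in the stated space. Uniqueness of $u$ follows from uniqueness of $\tilde{u}$ together with the fact that the difference of two solutions has zero data and hence zero energy. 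Continuity of $\mathcal{A}_{M}^{-1}$ then follows from the Lax–Milgram estimate, which bounds $\Vert\tilde{u}\Vert_{\mathcal{H}^{1}(\Omega)}$ by the norm of the data functional, combined with the bound $\Vert\omega\mathcal{A}u\Vert_{L^{2}(\Omega)}=\Vert\omega f\Vert_{L^{2}(\Omega)}$ controlling the graph-norm part of $\Vert u\Vert_{\mathcal{H}^{1,0}(\Omega;\mathcal{A})}$, so that $\mathcal{A}_{M}$ is a continuous bijection and its inverse is continuous by the bounded inverse theorem.

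The main obstacle I anticipate is the coercivity step, and more precisely its justification on the unbounded exterior domain. On a bounded domain one would invoke a Poincaré–Friedrichs inequality to pass from control of the gradient to control of the full $H^{1}$ norm, but here no such inequality is available in the usual form; the weighted setting is designed precisely to circumvent this, and everything hinges on the seminorm–norm equivalence on $\mathcal{H}^{1}(\Omega)$ quoted from \cite{lions}. I would need to confirm that this equivalence survives the restriction to the closed subspace $\mathcal{H}^{1}_{S_{D}}(\Omega)$ (it does, since it holds on all of $\mathcal{H}^{1}(\Omega)$), and to check carefully that the weighted term $\Vert\omega^{-1}v\Vert_{L^{2}(\Omega)}$ in the norm does not obstruct coercivity — again this is exactly what the Beppo–Levi structure guarantees. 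A secondary technical point is ensuring that the homogenisation $u_{0}=\gamma_{-1}^{+}\Phi$ lands in $\mathcal{H}^{1,0}(\Omega;\mathcal{A})$ rather than merely $\mathcal{H}^{1}(\Omega)$, which may require absorbing the extra term $\mathcal{A}u_{0}$ into the right-hand-side functional rather than demanding it be square-integrable against $\omega$.
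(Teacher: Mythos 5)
Your proposal is correct and takes essentially the same route as the paper: the paper does not prove Theorem \ref{ch5thinv0} itself but defers to \cite[Appendix A]{exterior}, where unique solvability is established exactly as you describe --- a variational formulation with the Dirichlet data homogenised, the Lax--Milgram lemma, and coercivity obtained from the lower bound on $a$ together with the seminorm--norm equivalence in $\mathcal{H}^{1}(\Omega)$ for unbounded domains. Your closing observations (that coercivity really uses the bound $0<C_{1}<a$ of Condition \ref{cond0} rather than the bare hypothesis $a>0$, and that $u_{0}$ need not itself lie in $\mathcal{H}^{1,0}(\Omega;\mathcal{A})$ since only $\mathcal{A}u=f$ for the full solution is needed) are exactly the right technical points and are handled consistently with the cited proof.
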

It is clear that hypotheses of the Theorem \ref{ch5thinv0} are satisfied under the assumption of Condition \ref{cond0}. Hence, the mixed BVP problem \eqref{ch5BVP1}-\eqref{ch5BVPN} is uniquely solvable. 

\section{Parametrices and remainders}
 We define a parametrix (Levi function) $P(x,y)$ for the differential operator $\mathcal{A}$ differentiating with respect to $x$, as a function on two variables that satisfies
 \begin{equation}\label{parametrixdef}
 \mathcal{A}P(x,y) = \delta(x-y)+R(x,y).
 \end{equation}
 where $\delta(.)$ is the Dirac distribution and the term $R(x,y)$ is a weakly singular distribution, i.e. $\mathcal{O}(\vert x-y\vert^{-2})$, so-called remainder. 
A given operator $\mathcal{A}$ may have more than one parametrix. For example, the parametrix 
\begin{equation*}\label{ch4P2002}
P^y(x,y)=\dfrac{1}{a(y)} P_\Delta(x-y),\hspace{1em}x,y \in \mathbb{R}^{3},
\end{equation*} 
was employed in \cite{localised, mikhailov1}, for the operator $\mathcal{A}$, given in \eqref{ch5operatorA},  where
\begin{equation*}\label{ch4fundsol}
P_{\Delta}(x-y) = \dfrac{-1}{4\pi \vert x-y\vert}
\end{equation*}
is the fundamental solution of the Laplace operator.
The remainder corresponding to the parametrix $P^{y}$ is 
\begin{equation*}
\label{ch43.4} R^y(x,y)
=\sum\limits_{i=1}^{3}\frac{1}{a(y)}\, \frac{\partial a(x)}{\partial x_i} \frac{\partial }{\partial x_i}P_\Delta(x-y)
\,,\;\;\;x,y\in {\mathbb R}^3.
\end{equation*}
\textit{In this paper}, we consider the parametrix $P^{x}$ used in \cite{carloscomp, carlosL}, where analogous results to the ones presented in the upcoming sections have been obtained in \textit{bounded} domains with smooth and Lipschitz boundary.

The parametrix $P^{x}$ is defined as follows:
\begin{align}\label{ch4Px}
P(x,y):=P^x(x,y)=\dfrac{1}{a(x)} P_\Delta(x-y),\hspace{1em}x,y \in \mathbb{R}^{3},
\end{align}
which leads to the corresponding remainder 
\begin{align*}\label{ch4remainder}
R(x,y) =R^x(x,y) &= 
-\sum\limits_{i=1}^{3}\dfrac{\partial}{\partial x_{i}}
\left(\frac{1}{a(x)}\dfrac{\partial a(x)}{\partial x_{i}}P_{\Delta}(x,y)\right)\\
& =
-\sum\limits_{i=1}^{3}\dfrac{\partial}{\partial x_{i}}
\left(\dfrac{\partial (\ln a(x))}{\partial x_{i}}P_{\Delta}(x,y)\right),\hspace{0.5em}x,y \in \mathbb{R}^{3}.
\end{align*}
Due to the smoothness of the variable coefficient $a(x)$, both remainders $R_x$ and $R_y$ are weakly singular, i.e., $
R^x(x,y),\,R^y(x,y)\in \mathcal{O}(\vert x-y\vert^{-2}).$

Let us remark that this parametrix $P^{x}(x,y)$ is different from the parametrix 
\begin{equation*}\label{ch5P2002}
P^y(x,y)=\dfrac{1}{a(y)} P_\Delta(x-y),\hspace{1em}x,y \in \mathbb{R}^{3},
\end{equation*} 
which has been used to derive analogous results to those in this paper, in \cite{exterior}. 

The parametrix $P^{y}$ has been widely analysed in the literature, see \cite{localised, mikhailovlipschitz, numerics, mikhailov1, miksolandreg}. The difference between both parametrices relies on the dependence from the variable of the coefficient $a(x)$ or $a(y)$. Clearly, choosing a parametrix involving $a(y)$ simplifies the expression of the remainder as the coefficient $a(y)$ acts as a constant when differentiating with respect to $x$ which is the variable of differentiation of the operator $\mathcal{A}$. However, for some PDE problems, it is not always possible to obtain a parametrix that depends exclusively on $a(y)$ and not on $a(x)$. This is the case of the Stokes system, see \cite{carlosstokes}. Hence, the usefulness of the analysis of the family of parametrices depending on $a(x)$. 

\section{Volume and surface potentials}
Boundary-domain integral equations are usually formulated in terms of parametrix-based surface and volume potential operators. In this section, the surface and volume potentials based on the parametrix $P^{x}$ are introduced. We analyse their mapping properties in weighted Sobolev spaces. Additional boundedness conditions are often imposed on the variable coefficient $a(x)$ in order to prove the boundedness properties of the potential operators. 

\begin{cond}\label{cond1} We will assume the following condition further on unless stated otherwise:
\begin{equation}\label{ch5conda(x)2}
a\in \mathcal{C}^{1}(\mathbb{R}^{3})\quad and \quad \omega\nabla a \in L^{\infty}(\mathbb{R}^{3}).
\end{equation}
\end{cond}

\begin{remark}\label{ch5remmult}
If the coefficient $a(x)$ satisfies \eqref{ch5conda(x)1} and \eqref{ch5conda(x)2}, then $$\parallel ga\parallel_{\mathcal{H}^{1}(\Omega)} \, \leq k_{1}\parallel g \parallel_{\mathcal{H}^{1}(\Omega)}\,\, \text{and}\,\, \parallel g/a \parallel_{\mathcal{H}^{1}(\Omega)}\leq k_{2}\parallel g \parallel_{\mathcal{H}^{1}(\Omega)},$$ where the constants $k_{1}$ and $k_{2}$ do not depend on $g\in\mathcal{H}^{1}(\Omega)$. This implies that the functions $a$ and $1/a$ behave now as multipliers in the space $\mathcal{H}^{1}(\Omega)$. Furthermore, as long as $a\in \mathcal{C}^{1}(S)$, then $\partial_{n}a$ is also a multiplier. 
\end{remark}

The volume parametrix-based Newton-type potential and the remainder potential are respectively defined, for $y\in\mathbb R^3$, as
\begin{align*}
\mathcal{P}\rho(y)&:=\displaystyle\int_{\Omega} P(x,y)\rho(x)\hspace{0.25em}dx \\
\mathcal{R}\rho(y)&:=\displaystyle\int_{\Omega} R(x,y)\rho(x)\hspace{0.25em}dx.
\end{align*}

 The parametrix-based single layer and double layer  surface potentials are defined for $y\in\mathbb R^3:y\notin S $, as 
 \begin{align}
     \label{ch4SL}
V\rho(y)&:=-\int_{S} P(x,y)\rho(x)\hspace{0.25em}dS(x),\\
\label{ch4DL}
W\rho(y)&:=-\int_{S} T_{x}^{+}P(x,y)\rho(x)\hspace{0.25em}dS(x).
 \end{align}

We also define the following pseudo-differential operators associated with direct values of the single and  double layer potentials and with their conormal derivatives, for $y\in S$,
\begin{align}
\mathcal{V}\rho(y)&:=-\int_{S} P(x,y)\rho(x)\hspace{0.25em}dS(x),\nonumber \\
\mathcal{W}\rho(y)&:=-\int_{S} T_{x}P(x,y)\rho(x)\hspace{0.25em}dS(x),\nonumber\\
\mathcal{W'}\rho(y)&:=-\int_{S} T_{y}P(x,y)\rho(x)\hspace{0.25em}dS(x),\nonumber\\
\mathcal{L}^{\pm}\rho(y)&:=T_{y}^{\pm}{W}\rho(y)\nonumber.
\end{align}

The operators $\mathcal P, \mathcal R, V, W, \mathcal{V}, \mathcal{W}, \mathcal{W'}$ and $\mathcal{L}$ can be expressed in terms the volume and surface potentials and operators associated with the Laplace operator, as follows \begin{align}
\mathcal{P}\rho&=\mathcal{P}_{\Delta}\left(\dfrac{\rho}{a}\right),\label{ch4relP}\\
\mathcal{R}\rho&=\nabla\cdot\left[\mathcal{P}_{\Delta}(\rho\,\nabla \ln a)\right]-\mathcal{P}_{\Delta}(\rho\,\Delta \ln a),\label{ch4relR}\\
V\rho &= V_{\Delta}\left(\dfrac{\rho}{a}\right),\label{ch4relSL}\\
\mathcal{V}\rho &= \mathcal{V}_{\Delta} \left( \dfrac{\rho}{a}\right),\label{ch4relDVSL}\\
W\rho &= W_{\Delta}\rho -V_{\Delta}\left(\rho\frac{\partial \ln a}{\partial n}\right),\label{ch4relDL}\\
\mathcal{W}\rho &= \mathcal{W}_{\Delta}\rho -\mathcal{V}_{\Delta}\left(\rho\frac{\partial \ln a}{\partial n}\right),\label{ch4relDVDL}\\
\mathcal{W}'\rho &= a \mathcal{W'}_{\Delta}\left(\dfrac{\rho}{a}\right),\label{ch4relTSL}\\
\mathcal{L}^{\pm}\rho &= \widehat{\mathcal{L}}\rho - aT^{\pm}_\Delta V_{\Delta}\left(\rho\frac{\partial \ln a}{\partial n}\right),
\label{ch4relTDL}\\
\widehat{\mathcal{L}}\rho &:= a\mathcal{L}_{\Delta}\rho.\label{ch4hatL}
\end{align}
The symbols with the subscript $\Delta$ denote the analogous operators for the constant coefficient case, $a\equiv 1$. Furthermore, by the Lyapunov-Tauber theorem (cf. \cite{tauber, tauber2} and more references therein), $\mathcal{L}_{\Delta}^{+}\rho = \mathcal{L}_{\Delta}^{-}\rho = \mathcal{L}_{\Delta}\rho$.

These relations can be exploited to obtain mapping properties of the parametrix based surface and volume potentials taking into account those mapping properties already known for the analogous surface and volumen potentials constructed with the fundamental solution of the Laplace equation. 

One of the main differences with respect the bounded domain case is that the integrands of the operators $V$, $W$, $\mathcal{P}$ and $\mathcal{R}$ and their corresponding direct values and conormal derivatives do not always belong to $L^{1}$. In these cases, the integrals should be understood as the corresponding duality forms (or their their limits of these forms for the infinitely smooth functions, existing due to the density in corresponding Sobolev spaces).

\begin{theorem}\label{ch5thmapVW} Suppose that Condition \ref{cond1} holds. Then, the operators
\begin{align*}
V:&H^{-1/2}(S) \longrightarrow \mathcal{H}^{1}(\Omega),\\
W:&H^{1/2}(S) \longrightarrow \mathcal{H}^{1}(\Omega)
\end{align*} are continuous.
\end{theorem}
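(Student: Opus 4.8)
The plan is to reduce the mapping properties of the variable-coefficient potentials $V$ and $W$ to the already-known mapping properties of their constant-coefficient (Laplace) counterparts $V_\Delta$ and $W_\Delta$ via the representation formulas \eqref{ch4relSL} and \eqref{ch4relDL}. The key fact I would invoke at the outset is the classical result, established in the exterior-domain / weighted-Sobolev framework of \cite{exterior} for the parametrix $P^y$, that the Laplace single and double layer potentials are continuous as
\[
V_\Delta:H^{-1/2}(S)\longrightarrow\mathcal{H}^{1}(\Omega),\qquad W_\Delta:H^{1/2}(S)\longrightarrow\mathcal{H}^{1}(\Omega).
\]
These are standard in the weighted setting and I would cite them rather than reprove them. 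The whole theorem then becomes a matter of checking that the extra factors of $a$, $1/a$ and $\partial_n\ln a$ appearing in the formulas are harmless, i.e. that they map the relevant spaces boundedly into themselves.

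For the single layer operator I would argue directly from \eqref{ch4relSL}: given $\rho\in H^{-1/2}(S)$, the density $\rho/a$ again lies in $H^{-1/2}(S)$ because, by Condition \ref{cond1} together with \eqref{ch5conda(x)1}, the coefficient $1/a$ is a multiplier on the boundary Bessel-potential spaces. Concretely, $a\in\mathcal{C}^{1}(\mathbb{R}^3)$ with the bounds $0<C_1<a<C_2$ forces $1/a\in\mathcal{C}^{1}$ and hence bounded multiplication on $H^{1/2}(S)$, and by duality on $H^{-1/2}(S)$. Then $V\rho=V_\Delta(\rho/a)$ is the composition of a bounded multiplier $H^{-1/2}(S)\to H^{-1/2}(S)$ with the continuous operator $V_\Delta:H^{-1/2}(S)\to\mathcal{H}^1(\Omega)$, and continuity of $V$ follows with $\|V\rho\|_{\mathcal{H}^1(\Omega)}\le C\|\rho/a\|_{H^{-1/2}(S)}\le C'\|\rho\|_{H^{-1/2}(S)}$.

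For the double layer operator I would split \eqref{ch4relDL} as $W\rho=W_\Delta\rho-V_\Delta\big(\rho\,\partial_n\ln a\big)$ and bound the two terms separately. The first term is immediate from the continuity of $W_\Delta$. For the second, the crux is that the factor $\partial_n\ln a=(\partial_n a)/a$ is a bounded multiplier on $H^{1/2}(S)$: this is exactly the content of the last sentence of Remark \ref{ch5remmult}, where $\partial_n a$ is asserted to be a multiplier (using $a\in\mathcal{C}^1(S)$, so $\partial_n\ln a\in\mathcal{C}^0(S)$ at least, which suffices for multiplication on $H^{1/2}(S)$ given the smoothness of $S$). Hence $\rho\,\partial_n\ln a\in H^{1/2}(S)\hookrightarrow H^{-1/2}(S)$ depends continuously on $\rho\in H^{1/2}(S)$, and feeding it into the continuous $V_\Delta$ yields a continuous map $H^{1/2}(S)\to\mathcal{H}^1(\Omega)$. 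Adding the two bounded contributions gives the continuity of $W$.

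The main obstacle, and the step I would want to justify most carefully, is precisely the multiplier claims in the weighted/boundary spaces rather than any deep potential-theoretic estimate. On the domain side one must be sure that the weighted space $\mathcal{H}^1(\Omega)$ is the correct target and that the citations from \cite{exterior} genuinely deliver $V_\Delta,W_\Delta$ into $\mathcal{H}^1(\Omega)$ (and not merely into $H^1$ of bounded subdomains), since this is where the unbounded geometry and the weight $\omega$ enter; I would lean on Remark \ref{ch5remmult} and the density of $\mathcal{D}(\overline\Omega)$ in $\mathcal{H}^1(\Omega)$ to transfer the estimates from smooth densities to the full spaces by continuity. On the boundary side the multiplier property of $\partial_n\ln a$ requires the regularity of $a$ near $S$ and the smoothness of $S$; here I would note that $\mathcal{C}^1$ multipliers act boundedly on $H^{s}(S)$ for $|s|\le 1$, which covers the $s=\pm 1/2$ ranges needed, so no fractional-order multiplier subtlety survives. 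Everything else is routine composition of bounded maps.
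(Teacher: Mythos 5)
Your proposal is correct and follows essentially the same route as the paper: both reduce $V$ and $W$ to the Laplace potentials $V_\Delta$, $W_\Delta$ via the relations \eqref{ch4relSL} and \eqref{ch4relDL}, use Condition \ref{cond1} and Remark \ref{ch5remmult} to verify that $1/a$ and $\partial_n \ln a$ are bounded multipliers on $H^{\pm 1/2}(S)$, and then invoke the weighted-space mapping properties of $V_\Delta$ and $W_\Delta$ from \cite[Theorem 4.1]{exterior}. Your added justifications (duality for the $H^{-1/2}(S)$ multiplier property, the embedding $H^{1/2}(S)\hookrightarrow H^{-1/2}(S)$) merely make explicit what the paper leaves implicit.
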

\begin{proof}
Let us consider a function $g \in H^{-1/2}(S)$, then $\dfrac{g}{a}$ also belongs to $H^{-1/2}(S)$ in virtue of Remark \ref{ch5remmult} and Condition \ref{cond1}. Then, relation \eqref{ch4relSL} along with the mapping property $V_{\Delta}:H^{-1/2}(S)\longrightarrow \mathcal{H}^{1}(\Omega;\Delta)$, cf. \cite[Theorem 4.1]{exterior}; it is clear that $Vg = V_{\Delta}\left(g/a\right)\in \mathcal{H}^{1}(\Omega;\Delta)$ what implies $Vg\in \mathcal{H}^{1}(\Omega)$. 

Let us prove now the result for the operator $W$. If $g \in H^{1/2}(S)$, then $\partial_{n}(\ln a)g$ also belongs to $H^{1/2}(S)$ in virtue of Remark \ref{ch5remmult} and Condition \ref{cond1}. Then, relation \eqref{ch4relDL} along with the mapping properties $V_{\Delta}:H^{-1/2}(S)\longrightarrow \mathcal{H}^{1}(\Omega;\Delta)$ and $W_{\Delta}:H^{1/2}(S)\longrightarrow \mathcal{H}^{1}(\Omega;\Delta)$ imply that $Wg\in \mathcal{H}^{1}(\Omega;\Delta)$ from where it follows that $Wg\in \mathcal{H}^{1}(\Omega)$.

\end{proof}

\begin{corollary} \label{ch5thmapVWPRc} The following operators are continuous under Condition \ref{cond1} and \eqref{deltacond},
\begin{align}
V&:H^{-1/2}(S) \longrightarrow \mathcal{H}^{1,0}(\Omega; \mathcal{A})\label{ch5opVcont},\\
W&: H^{1/2}(S) \longrightarrow \mathcal{H}^{1,0}(\Omega; \mathcal{A})\label{ch5opWcont}.
\end{align}
\end{corollary}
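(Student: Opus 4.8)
The plan is to reduce everything to what has already been established in Theorem \ref{ch5thmapVW}. That theorem gives the continuity of $V:H^{-1/2}(S)\to\mathcal{H}^1(\Omega)$ and $W:H^{1/2}(S)\to\mathcal{H}^1(\Omega)$, and since by definition $\|g\|^2_{\mathcal{H}^{1,0}(\Omega;\mathcal{A})}=\|g\|^2_{\mathcal{H}^1(\Omega)}+\|\omega\,\mathcal{A}g\|^2_{L^2(\Omega)}$, the only new content is to show that $\mathcal{A}V\rho$ and $\mathcal{A}W\rho$ lie in $L^2(\omega;\Omega)$ together with the corresponding norm bounds. First I would invoke the representations \eqref{ch4relSL} and \eqref{ch4relDL}, which write $V\rho$ and $W\rho$ as Laplace single- and double-layer potentials of modified densities. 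Because each surface density is supported on $S=\partial\Omega$, these Laplace potentials are genuinely harmonic, indeed $\mathcal{C}^\infty$, inside the open set $\Omega$; that is, $\Delta V\rho=0$ and $\Delta W\rho=0$ in $\Omega$.

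Next I would use the pointwise identity $\mathcal{A}u=\nabla\cdot(a\nabla u)=a\,\Delta u+\nabla a\cdot\nabla u$, valid classically in $\Omega$ since $a\in\mathcal{C}^1(\mathbb{R}^3)$ by Condition \ref{cond1} and the potentials are smooth there. Harmonicity annihilates the second-order term, leaving $\mathcal{A}V\rho=\nabla a\cdot\nabla V\rho$ and $\mathcal{A}W\rho=\nabla a\cdot\nabla W\rho$. The weighted $L^2$ bound then follows from the elementary estimate $\|\omega\,\mathcal{A}V\rho\|_{L^2(\Omega)}=\|(\omega\nabla a)\cdot\nabla V\rho\|_{L^2(\Omega)}\le\|\omega\nabla a\|_{L^\infty(\mathbb{R}^3)}\,\|\nabla V\rho\|_{L^2(\Omega)}$, where $\omega\nabla a\in L^\infty(\mathbb{R}^3)$ by Condition \ref{cond1} and $\nabla V\rho\in L^2(\Omega)$ because $V\rho\in\mathcal{H}^1(\Omega)$ by Theorem \ref{ch5thmapVW}. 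Combined with the $\mathcal{H}^1$-bound of that theorem this gives $\|V\rho\|_{\mathcal{H}^{1,0}(\Omega;\mathcal{A})}\le C\,\|\rho\|_{H^{-1/2}(S)}$, and the identical argument applied through \eqref{ch4relDL} yields $\|W\rho\|_{\mathcal{H}^{1,0}(\Omega;\mathcal{A})}\le C\,\|\rho\|_{H^{1/2}(S)}$, which is the asserted continuity \eqref{ch5opVcont}--\eqref{ch5opWcont}.

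The main obstacle is not the second-order term, which harmonicity removes, but rather controlling the surviving first-order term $\nabla a\cdot\nabla(\cdot)$ uniformly up to infinity on the unbounded domain. This is precisely where the weight enters: the decay built into $\omega\nabla a\in L^\infty(\mathbb{R}^3)$ from Condition \ref{cond1}, reinforced by the hypothesis \eqref{deltacond}, is what makes $\nabla a\cdot\nabla V\rho$ weighted-square-integrable, since $\nabla V\rho$ is only controlled in the unweighted $L^2(\Omega)$ norm. The role of Remark \ref{ch5remmult} is complementary: it guarantees that $a$, $1/a$ and $\partial_{n}a$ act as multipliers on the relevant spaces, so that the modified densities $\rho/a$ and $\rho\,\frac{\partial \ln a}{\partial n}$ remain in the correct trace spaces and the relations \eqref{ch4relSL}--\eqref{ch4relDL} together with the $\mathcal{H}^1$-mapping of Theorem \ref{ch5thmapVW} may legitimately be applied. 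A last point to verify carefully is that the classical computation $\mathcal{A}u=\nabla a\cdot\nabla u$ transfers to the weak definition \eqref{ch5exbilinearA} of $\mathcal{A}$ on $\mathcal{H}^{1,0}(\Omega;\mathcal{A})$, which I would settle by a density argument using that $\mathcal{D}(\overline{\Omega})$ is dense in $\mathcal{H}^1(\Omega)$.
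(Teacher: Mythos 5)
Your proposal is correct and follows essentially the same route as the paper's own proof: reduce to Theorem \ref{ch5thmapVW}, write $\mathcal{A}h = a\Delta h + \nabla a\cdot\nabla h$, use harmonicity of the Laplace layer potentials in $\Omega$ (via \eqref{ch4relSL} and \eqref{ch4relDL}) to kill the second-order term, and bound the surviving term $\nabla a\cdot\nabla V\rho$ in $L^{2}(\omega;\Omega)$ using $\omega\nabla a\in L^{\infty}(\mathbb{R}^{3})$ from Condition \ref{cond1}. Your additional remarks on the multiplier properties and on transferring the classical identity to the weak setting by density only make explicit points the paper leaves implicit.
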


\begin{proof} Let us prove first the mapping property \eqref{ch5opVcont}. 

From  Theorem \ref{ch5thmapVW}, we have that 
$Vg\in \mathcal{H}^{1}(\Omega)$ for some $g\in H^{-1/2}(S)$. Hence, it suffices to prove that $Vg\in L^{2}(\omega;\Omega)$. 

Differentiating using the product rule, we can write
\begin{equation}\label{ch5cordA}
\mathcal{A}h = \nabla a \nabla h + a \Delta h.
\end{equation}

Taking into account relation \eqref{ch4relSL}
and applying \eqref{ch5cordA} to $h=V_{\Delta}(g/a)$, we get 
\begin{equation}
\mathcal{A}V_{\Delta}\left(\dfrac{g}{a}\right) = \sum_{i=1}^{3}\dfrac{\partial a}{\partial y_{i}} \dfrac{\partial V_{\Delta}}{\partial y_{i}} \left(\dfrac{g}{a}\right) + a \Delta V_{\Delta}\left(\dfrac{g}{a}\right) = \sum_{i=1}^{3}\dfrac{\partial a}{\partial y_{i}} \dfrac{\partial V_{\Delta}}{\partial y_{i}} \left(\dfrac{g}{a}\right)=\nabla a\nabla V(g).
\end{equation}
By virtue of the mapping property for the operator $V$ provided by Theorem \ref{ch5thmapVW}, the last term belongs to $L^{2}(\omega;\Omega)$ due to the fact that  $V_{\Delta}(g/a)=Vg\in \mathcal{H}^{1}(\Omega)$, and thus its derivatives belong to $L^{2}(\omega;\Omega)$. The term  $\nabla a$ acts as a multiplier in the space $L^{2}(\omega;\Omega)$ due to Condtion \ref{cond1}. On the other hand, the term $a\Delta V_{\Delta}(g/a)$ vanishes on $\Omega$ since $V_{\Delta}(\cdot)$ is the single layer potential for the Laplace equation, i.e., $V_{\Delta}(g/a)$ is a harmonic function. This, completes the proof for the operator $V$. 

The proof for the operator $W$ follows from a similar argument. 
\end{proof}

\begin{cond}\label{deltacond} In addition to Condition \ref{cond0} and Condition \ref{cond1}, we will sometimes need the following condition:
\begin{equation}\label{ch5conda(x)3}
\omega^{2}\Delta a\in L^{\infty}(\Omega).
\end{equation}
\end{cond}

\begin{remark}
Note as well that due to Condition \ref{cond0} and the continuity of the function $\ln a$, the components of $\nabla (\ln a)$ and $\Delta (\ln a)$ are bounded as well.
\end{remark}

\begin{theorem}\label{ch5thmapPPRc} The following operators are continuous under Condition \ref{cond1},
\begin{align}
\textbf{P}&: \mathcal{H}^{-1}(\mathbb{R}^{3})\longrightarrow \mathcal{H}^{1}(\mathbb{R}^{3}),\label{ch5opPcontbf}\\
\textbf{R}&: L^{2}(\omega^{-1};\mathbb{R}^{3}) \longrightarrow \mathcal{H}^{1}(\mathbb{R}^{3}),\label{ch5opRcontbf}\\
\mathcal{P}&: \widetilde{\mathcal{H}}^{-1}(\Omega)\longrightarrow \mathcal{H}^{1}(\mathbb{R}^{3})\label{ch5opPcont}.
\end{align}
\end{theorem}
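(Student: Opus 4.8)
The plan is to reduce each of the three operators to the Newton potential $\mathcal{P}_\Delta$ for the Laplacian, whose full-space continuity $\mathcal{P}_\Delta:\mathcal{H}^{-1}(\mathbb{R}^{3})\to\mathcal{H}^{1}(\mathbb{R}^{3})$ is the constant-coefficient result available from \cite{exterior}, and then to absorb the variable coefficient through the multiplier properties of Remark \ref{ch5remmult}. The relations \eqref{ch4relP} and \eqref{ch4relR}, which hold verbatim with the integration taken over $\mathbb{R}^{3}$, are the bridge: $\textbf{P}\rho=\mathcal{P}_\Delta(\rho/a)$ and $\textbf{R}\rho=\nabla\cdot\mathcal{P}_\Delta(\rho\nabla\ln a)-\mathcal{P}_\Delta(\rho\Delta\ln a)$. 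Throughout I would use that in $\mathcal{H}^{1}(\mathbb{R}^{3})$ the seminorm $\|\nabla\cdot\|_{L^{2}}$ is equivalent to the full norm, so that it suffices to control gradients in $L^{2}$.

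For \eqref{ch5opPcontbf} I would argue as follows. By Remark \ref{ch5remmult}, under Condition \ref{cond1} multiplication by $1/a$ is bounded on $\mathcal{H}^{1}(\mathbb{R}^{3})$; since this multiplication is formally self-adjoint with respect to the $L^{2}$ pairing, duality shows it is also bounded on $\mathcal{H}^{-1}(\mathbb{R}^{3})=[\mathcal{H}^{1}(\mathbb{R}^{3})]^{*}$. Hence $\rho\mapsto\rho/a$ maps $\mathcal{H}^{-1}(\mathbb{R}^{3})$ into itself continuously, and composing with $\mathcal{P}_\Delta:\mathcal{H}^{-1}(\mathbb{R}^{3})\to\mathcal{H}^{1}(\mathbb{R}^{3})$ through $\textbf{P}\rho=\mathcal{P}_\Delta(\rho/a)$ yields \eqref{ch5opPcontbf}.

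For \eqref{ch5opRcontbf} I would use \eqref{ch4relR} and treat the two summands separately. Writing $\rho\nabla\ln a=(\omega^{-1}\rho)(\omega\nabla\ln a)$ and noting that $\omega\nabla\ln a\in L^{\infty}$ follows from Condition \ref{cond1} together with $0<C_{1}<a$, the vector field $\rho\nabla\ln a$ lies in $L^{2}(\mathbb{R}^{3})$ whenever $\rho\in L^{2}(\omega^{-1};\mathbb{R}^{3})$. Taking one further gradient of the first term produces second derivatives of $\mathcal{P}_\Delta$ applied to an $L^{2}$ field; these are Calder\'on--Zygmund operators bounded on $L^{2}$, so $\nabla[\nabla\cdot\mathcal{P}_\Delta(\rho\nabla\ln a)]\in L^{2}$, and by the seminorm equivalence the first term lies in $\mathcal{H}^{1}(\mathbb{R}^{3})$. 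For the second term I would show $\rho\Delta\ln a\in L^{2}(\omega;\mathbb{R}^{3})\subset\mathcal{H}^{-1}(\mathbb{R}^{3})$ by writing $\omega\rho\Delta\ln a=(\omega^{-1}\rho)(\omega^{2}\Delta\ln a)$, where the decay $\omega^{2}\Delta\ln a\in L^{\infty}$ is guaranteed by the standing assumptions on $a$ (specifically \eqref{ch5conda(x)3}, via $\Delta\ln a=\Delta a/a-|\nabla a|^{2}/a^{2}$ and Condition \ref{cond1}); then $\mathcal{P}_\Delta:\mathcal{H}^{-1}(\mathbb{R}^{3})\to\mathcal{H}^{1}(\mathbb{R}^{3})$ closes this term. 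I expect the remainder $\textbf{R}$ to be the main obstacle: the divergence in \eqref{ch4relR} costs one derivative, so one cannot simply invoke $\mathcal{P}_\Delta:\mathcal{H}^{-1}\to\mathcal{H}^{1}$ but must pass through the $L^{2}$ boundedness of the second-order Newton-potential operators and the seminorm--norm equivalence, and the delicate bookkeeping is to keep track of the weights so that each argument lands in exactly the right space ($L^{2}$ for the first-order term, $L^{2}(\omega)$ for the zeroth-order term).

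Finally, \eqref{ch5opPcont} follows from \eqref{ch5opPcontbf} by restriction. Any $\rho\in\widetilde{\mathcal{H}}^{-1}(\Omega)$ is supported in $\overline{\Omega}$, so $\mathcal{P}\rho=\textbf{P}\rho$; and using the representation $\rho_{j}=\partial_{i}\rho_{ij}+\rho_{j}^{0}$ of elements of $\widetilde{\mathcal{H}}^{-1}(\Omega)$ recalled in Section 2, one checks that the natural inclusion $\widetilde{\mathcal{H}}^{-1}(\Omega)\hookrightarrow\mathcal{H}^{-1}(\mathbb{R}^{3})$ is continuous. Composing this inclusion with the continuity of $\textbf{P}$ on $\mathcal{H}^{-1}(\mathbb{R}^{3})$ established above gives \eqref{ch5opPcont}.
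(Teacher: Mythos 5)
Your treatment of \eqref{ch5opPcontbf} and \eqref{ch5opPcont} coincides with the paper's own argument: relation \eqref{ch4relP}, the multiplier property of $1/a$, and composition with $\mathbf{P}_{\Delta}:\mathcal{H}^{-1}(\mathbb{R}^{3})\to\mathcal{H}^{1}(\mathbb{R}^{3})$ from \cite[Theorem 4.1]{exterior}; your explicit duality argument for the boundedness of $\rho\mapsto\rho/a$ on $\mathcal{H}^{-1}(\mathbb{R}^{3})=[\mathcal{H}^{1}(\mathbb{R}^{3})]^{*}$ is a detail the paper leaves implicit, and it is welcome.

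The proof of \eqref{ch5opRcontbf}, however, has a genuine gap: you prove the statement under stronger hypotheses than the theorem asserts. The theorem assumes only Condition \ref{cond1}, i.e.\ $a\in\mathcal{C}^{1}(\mathbb{R}^{3})$ and $\omega\nabla a\in L^{\infty}(\mathbb{R}^{3})$, while your estimate of the zeroth-order term $\mathcal{P}_{\Delta}(\rho\Delta\ln a)$ invokes $\omega^{2}\Delta\ln a\in L^{\infty}$, which requires both the existence of second derivatives of $a$ and the decay condition \eqref{ch5conda(x)3}; neither is available here, since Condition \ref{deltacond} is only imposed in later statements (Theorem \ref{ch5thmapPRH10} onwards). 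The paper never meets this term: it keeps the remainder in divergence form and commutes the derivative with the whole-space Newton potential, writing $\mathbf{R}g=-\mathbf{P}_{\Delta}g^{*}$ with $g^{*}=\sum_{i=1}^{3}\partial_{x_{i}}\bigl(g\,\partial_{x_{i}}(\ln a)\bigr)$, the distributional divergence of the field $g\nabla\ln a$, which lies in $L^{2}(\mathbb{R}^{3})$ because $g\in L^{2}(\omega^{-1};\mathbb{R}^{3})$ and $\omega\nabla\ln a\in L^{\infty}$; hence $g^{*}\in\mathcal{H}^{-1}(\mathbb{R}^{3})$ and a single application of $\mathbf{P}_{\Delta}:\mathcal{H}^{-1}(\mathbb{R}^{3})\to\mathcal{H}^{1}(\mathbb{R}^{3})$ finishes. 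Indeed, if you commute $\nabla\cdot$ with $\mathcal{P}_{\Delta}$ in \eqref{ch4relR}, the two $\Delta\ln a$ contributions cancel, so splitting the relation into two separately estimated terms is precisely what creates the spurious need for \eqref{ch5conda(x)3}. A secondary weak point: for the first term you conclude membership in $\mathcal{H}^{1}(\mathbb{R}^{3})$ from $\nabla\bigl[\nabla\cdot\mathcal{P}_{\Delta}(\rho\nabla\ln a)\bigr]\in L^{2}$ ``by the seminorm equivalence,'' but that equivalence is valid for functions already known to belong to $\mathcal{H}^{1}$; gradient control alone does not yield the weighted $L^{2}$ part of the norm, and you would still need a density or Hardy-inequality argument, whereas the paper's route lands in $\mathcal{H}^{1}(\mathbb{R}^{3})$ directly through the cited mapping property of $\mathbf{P}_{\Delta}$.
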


\begin{proof} 
 Let $g\in \mathcal{H}^{-1}(\mathbb{R}^{3})$. Then, by virtue of the relation \eqref{ch4relP}
$\textbf{P}g = \textbf{P}_{\Delta}(g/a)$. Since Condition \ref{ch5conda(x)2} holds, $(g/a)\in \mathcal{H}^{-1}(\mathbb{R}^{3})$ and thefore the continuity of the operator $\textbf{P}$ follows from  the continuity of $\textbf{P}_{\Delta}:\mathcal{H}^{-1}(\mathbb{R}^{3})\longrightarrow \mathcal{H}^{1}(\mathbb{R}^{3})$, which at the same time implies the continuity of the operator \eqref{ch5opPcont}, see \cite[Theorem 4.1]{exterior} and more references therein.

Let us prove now the continuity of the operator $\textbf{R}$. Due to the second condition in \eqref{ch5conda(x)2}, the components of $\nabla a\in L^{2}(\mathbb{R}^{3})$ behave as multipliers in the space $L^{2}(\omega^{-1}; \mathbb{R}^{3})$. Let $g\in L^{2}(\omega^{-1}; \mathbb{R}^{3})$,  then the relation \eqref{ch4relR} applies and gives
\begin{align}
\mathbf{R}g(y)&=-\nabla\cdot \mathbf{P}_{\Delta}(g\cdot \nabla (\ln a))(y)\nonumber=-\sum_{i=1}^{3}\dfrac{\partial}{\partial y_{i}}\mathbf{P}_{\Delta}\left(g\cdot \dfrac{\partial(\ln a)}{\partial x_{i}}\right)(y)\nonumber\\
&=-\sum_{i=1}^{3}\mathbf{P}_{\Delta}\left[\dfrac{\partial}{\partial x_{i}}\left(g\cdot \dfrac{\partial(\ln a)}{\partial x_{i}}\right)\right](y):=-\mathbf{P}_{\Delta}g^{*}(y).
\end{align}
In this case, $g^{*}\in \mathcal{H}^{-1}(\mathbb{R}^{3})$ as a result of a similar argument as in \cite[Theorem 4.1]{exterior}. Here, $\nabla \ln a $ is multipliers under Condition \ref{ch5conda(x)2} in the space $\mathcal{H}^{-1}(\mathbb{R}^{3})$.

Since the operator ${\mathbf{P}_{\Delta}:\mathcal{H}^{-1}(\mathbb{R}^{3})\longrightarrow \mathcal{H}^{1}(\mathbb{R}^{3})}$ is continuous, the operator ${\mathbf{R}: L^{2}(\omega^{-1}; \mathbb{R}^{3})\longrightarrow \mathcal{H}^{1}(\mathbb{R}^{3})}$ is also continuous. \end{proof}
%

\begin{theorem} \label{ch5thmapPRH10} The following operators are continuous under Condition \ref{ch5conda(x)2} and \eqref{ch5conda(x)3},
\begin{align}
\mathcal{P}&: L^{2}(\omega; \Omega) \longrightarrow \mathcal{H}^{1,0}(\mathbb{R}^{3}; \mathcal{A}),\label{ch5opPH10}\\
\mathcal{R}&: \mathcal{H}^{1}(\Omega) \longrightarrow \mathcal{H}^{1,0}(\Omega; \mathcal{A})\label{ch5opRH10}.
\end{align}
\end{theorem}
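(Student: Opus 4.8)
The plan is to reduce both claims, through the Laplace-based representation formulas \eqref{ch4relP} and \eqref{ch4relR}, to the mapping properties already collected in Theorem \ref{ch5thmapPPRc}, and then to verify separately that the image lands in the operator part of $\mathcal{H}^{1,0}(\,\cdot\,;\mathcal{A})$ by computing $\mathcal{A}$ explicitly via the product rule \eqref{ch5cordA} and using that $\mathcal{P}_\Delta$ is the Newtonian potential of the Laplacian, so that $\Delta\mathcal{P}_\Delta$ is the identity. Throughout I would exploit that, under \eqref{ch5conda(x)2} and \eqref{ch5conda(x)3}, the quantities $\nabla a$ (with weight $\omega$) and $\Delta a$ (with weight $\omega^{2}$), and hence $\nabla\ln a$ and $\Delta\ln a$, act as bounded multipliers in the relevant weighted $L^{2}$-spaces, as recorded in Remark \ref{ch5remmult} and the remark following Condition \ref{deltacond}.

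For $\mathcal{P}$ I would first note the continuous embedding $L^{2}(\omega;\Omega)\subset\widetilde{\mathcal{H}}^{-1}(\Omega)$ coming from the representation property of $\widetilde{\mathcal{H}}^{-1}(\Omega)$, so that \eqref{ch5opPcont} of Theorem \ref{ch5thmapPPRc} already gives $\mathcal{P}\rho\in\mathcal{H}^{1}(\mathbb{R}^{3})$ depending continuously on $\rho$. It then remains to bound $\omega\,\mathcal{A}\mathcal{P}\rho$ in $L^{2}$. Writing $\mathcal{P}\rho=\mathcal{P}_\Delta(\rho/a)$ and applying \eqref{ch5cordA}, the term $a\Delta\mathcal{P}_\Delta(\rho/a)$ collapses to $\rho$, so that
\begin{equation*}
\mathcal{A}\mathcal{P}\rho=\nabla a\cdot\nabla\mathcal{P}\rho+\rho .
\end{equation*}
Here $\|\omega\nabla a\cdot\nabla\mathcal{P}\rho\|_{L^{2}}\le\|\omega\nabla a\|_{L^{\infty}}\|\nabla\mathcal{P}\rho\|_{L^{2}}$ is finite by $\mathcal{P}\rho\in\mathcal{H}^{1}$ and \eqref{ch5conda(x)2}, while $\omega\rho\in L^{2}$ by hypothesis, yielding \eqref{ch5opPH10}; this mirrors the argument already used for $V$ in Corollary \ref{ch5thmapVW}'s sequel, the only difference being the non-vanishing source term $\rho$.

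For the remainder potential I would start from $\mathcal{H}^{1}(\Omega)\subset L^{2}(\omega^{-1};\Omega)$, so that \eqref{ch5opRcontbf} of Theorem \ref{ch5thmapPPRc} gives $\mathcal{R}\rho\in\mathcal{H}^{1}(\Omega)$ continuously. For the $\mathcal{A}$-part I would differentiate \eqref{ch4relR}: since $\Delta\mathcal{P}_\Delta$ is the identity,
\begin{equation*}
\Delta\mathcal{R}\rho=\nabla\cdot(\rho\,\nabla\ln a)-\rho\,\Delta\ln a=\nabla\rho\cdot\nabla\ln a ,
\end{equation*}
the two terms carrying $\rho\,\Delta\ln a$ cancelling. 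Consequently, using $a\,\nabla\ln a=\nabla a$,
\begin{equation*}
\mathcal{A}\mathcal{R}\rho=\nabla a\cdot\nabla\mathcal{R}\rho+a\,\nabla\rho\cdot\nabla\ln a=\nabla a\cdot\nabla\mathcal{R}\rho+\nabla a\cdot\nabla\rho ,
\end{equation*}
and both summands are bounded in $L^{2}(\omega;\Omega)$ by $\|\omega\nabla a\|_{L^{\infty}}$ times $\|\nabla\mathcal{R}\rho\|_{L^{2}}$ and $\|\nabla\rho\|_{L^{2}}$ respectively, both finite. This establishes \eqref{ch5opRH10}.

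The step I expect to be the main obstacle is the rigorous justification of differentiating \eqref{ch4relR} and of the cancellation of the $\rho\,\Delta\ln a$ terms: for $\rho$ merely in $\mathcal{H}^{1}(\Omega)$ the product $\rho\,\Delta\ln a$ is only a weighted-$L^{2}$ object, so I must check that $\Delta\ln a$ is a genuine multiplier into $L^{2}(\omega^{-1};\Omega)$ and that $\mathcal{P}_\Delta(\rho\,\Delta\ln a)$ admits the distributional identity $\Delta\mathcal{P}_\Delta(\rho\,\Delta\ln a)=\rho\,\Delta\ln a$. This is precisely where \eqref{ch5conda(x)3} enters: the weight $\omega^{2}$ in $\omega^{2}\Delta a\in L^{\infty}$ compensates the growth $\rho\sim\omega$ admissible in $\mathcal{H}^{1}(\Omega)$, so that $\Delta\ln a$ behaves as a bounded multiplier and all the manipulations above are legitimate. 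A density argument over $\mathcal{D}(\Omega)$, combined with the duality interpretation of the potentials mentioned before Theorem \ref{ch5thmapVW}, would turn these formal computations into a complete proof.
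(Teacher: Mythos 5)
Your argument is correct, and at the level of overall strategy it coincides with the paper's: both proofs reduce the $\mathcal{H}^{1}$-continuity to Theorem \ref{ch5thmapPPRc} (you via the embedding $L^{2}(\omega;\Omega)\subset\widetilde{\mathcal{H}}^{-1}(\Omega)$, the paper via the zero extension $\widetilde{g}\in L^{2}(\omega;\mathbb{R}^{3})\subset\mathcal{H}^{-1}(\mathbb{R}^{3})$ --- a cosmetic difference), and both then check the $L^{2}(\omega)$-bound on $\mathcal{A}$ of each potential by the product rule \eqref{ch5cordA}, the Newtonian-potential identity, and the multiplier conditions; your treatment of $\mathcal{P}$ is essentially the paper's proof verbatim. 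Where you genuinely deviate is in the computation of $\Delta\mathcal{R}\rho$. You differentiate the full two-term relation \eqref{ch4relR}, exploit the cancellation of the $\rho\,\Delta\ln a$ contributions, and arrive at $\Delta\mathcal{R}\rho=\nabla\rho\cdot\nabla\ln a$, so that the final estimate (equivalently $\mathcal{A}\mathcal{R}\rho=\nabla a\cdot\nabla\mathcal{R}\rho+\nabla a\cdot\nabla\rho$) uses only $\omega\nabla a\in L^{\infty}$, with \eqref{ch5conda(x)3} needed solely to legitimise the term-by-term manipulation --- i.e.\ to ensure $\rho\,\Delta\ln a\in L^{2}(\omega;\Omega)$, hence lies in the domain of $\mathbf{P}_{\Delta}$ where $\Delta\mathbf{P}_{\Delta}$ acts as the identity --- which is exactly the delicate point you flag. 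The paper instead starts from the one-term formula $\mathcal{R}g=-\nabla\cdot\mathcal{P}_{\Delta}(g\nabla\ln a)$, which, note, is \emph{not} the same as \eqref{ch4relR} as stated (it differs by a sign and drops the term $\mathcal{P}_{\Delta}(g\,\Delta\ln a)$); it then obtains $\Delta\mathcal{R}g=-\nabla\cdot(g\nabla\ln a)=-\nabla g\cdot\nabla\ln a-g\,\Delta\ln a$ and must invoke Condition \ref{deltacond} to bound the surviving $g\,\Delta\ln a$ term in $L^{2}(\omega;\Omega)$. Your computation is the one consistent with \eqref{ch4relR}, gives the cleaner final formula, and isolates more precisely the logical role of \eqref{ch5conda(x)3}; either route yields the claimed continuity, since $\nabla\rho\cdot\nabla\ln a$ and $\rho\,\Delta\ln a$ are each individually in $L^{2}(\omega;\Omega)$ under the stated conditions.
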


\begin{proof}
To prove the continuity of the operator \eqref{ch5opPH10}, we consider a function $g\in L^{2}(\omega; \Omega)$ and its extension by zero to $\mathbb{R}^{3}$ which we denote by $\widetilde{g}$. Clearly, $\widetilde{g}\in L^{2}(\omega; \mathbb{R}^{3})\subset \mathcal{H}^{-1}(\mathbb{R}^{3})$ and then $\mathcal{P}_{\Delta} g = \mathbf{P}_{\Delta}\widetilde{g} \in \mathcal{H}^{1}(\mathbb{R}^{3})$.  Bearing in mind that
\begin{equation*}
\mathcal{A}\mathcal{P}g(y) = g(y)+\sum_{i=1}^{3}\dfrac{\partial a(y)}{\partial y_{i}} \dfrac{\partial\mathcal{P}_{\Delta}}{\partial y_{i}}\left(\dfrac{g}{a}\right)(y),
\end{equation*}

under Condition \ref{ch5conda(x)2}, we conclude that $\mathcal{A}\mathcal{P}g(y)\in L^{2}(\omega,\Omega)$ and therefore $\mathcal{P}g\in \mathcal{H}^{1,0}(\Omega, \mathcal{A})$.

Finally, let us prove the continuity of the operator \eqref{ch5opRH10}. The continuity of the operator $\mathcal{R}:\mathcal{H}^{1}(\Omega)\longrightarrow \mathcal{H}^{1}(\Omega)$ follows from the continuous embedding ${\mathcal{H}^{1}(\Omega)\subset L^{2}(\omega^{-1};\Omega)}$ and the continuity of the operator \eqref{ch5opRcontbf}. Hence, we only need to prove that $\mathcal{A}\mathcal{R}g\in L^{2}(\omega;\Omega)$. For $g\in \mathcal{H}^{1}(\Omega)$ we have
\begin{align*}
\mathcal{A}\mathcal{R}g(y) = \dfrac{\partial a(y)}{\partial y_{i}}\dfrac{\partial \mathcal{R}g(y)}{\partial y_{i}} + a(y)\Delta \mathcal{R}g(y). 
\end{align*} 
As $\mathcal{R}g\in \mathcal{H}^{1}(\Omega)$, we only need to prove that $\Delta\mathcal{R}g(y)\in L^{2}(\omega ; \Omega)$. Using the relation \eqref{ch4relR}, we obtain that
\begin{align*}
\Delta \mathcal{R}g(y) &=\Delta \left[ -\nabla\cdot \mathcal{P}_{\Delta} (g \nabla (\ln a))\right]=  -\nabla\cdot \Delta\mathcal{P}_{\Delta} (g \nabla (\ln a))= -\nabla\cdot(g \nabla (\ln a)), 
\end{align*}
since $g\in\mathcal{H}^{1}(\Omega)$, then $g\in L^{2}(\omega, \Omega)$. $\nabla (\ln a)$ is a multiplier in the space $\mathcal{H}^{1}(\Omega)$ by virtue of the second condition in  \eqref{ch5conda(x)2}, then $(g \nabla \ln a)\in \mathcal{H}^{1}(\Omega)$. Consequently, ${-\nabla\cdot(g \nabla \ln a)\in L^{2}(\omega; \Omega)}$ by virtue of Condition \ref{deltacond}, from where it follows the result. 
\end{proof}

%
%

\section{Third Green identities and integral relations}
 Applying the second Green identity \eqref{ch5secondgreen}, with $v=P(x,y)$ and any distribution $u\in \mathcal{H}^{1,0}(\Omega;\mathcal{A})$ in $\Omega$, we obtain the third Green identity (\textit{integral representation formula}) for the function $u\in \mathcal{H}^{1,0}(\Omega ; \mathcal{A})$:
\begin{equation}\label{ch5green3}
u+\mathcal{R}u-VT^{+}u+W\gamma^{+}u=\mathcal{P}\mathcal{A}u,\hspace{1em}\text{in}\hspace{0.5em}\Omega.
\end{equation}

If $u\in \mathcal{H}^{1,0}(\Omega; \mathcal{A})$ is a solution of the PDE \eqref{ch5BVP1}, then, from \eqref{ch5green3}, we obtain
\begin{equation}\label{ch53GV}
u+\mathcal{R}u-VT^{+}(u)+W\gamma^{+}u=\mathcal{P}f,\hspace{0.5em}\text{in}\hspace{0.5em}\Omega. 
\end{equation}

Taking the trace and the conormal derivative of \eqref{ch53GV}, we obtain integral representation formulae for the trace and traction of $u$ respectively:
\begin{align}
    \label{ch53GG}
\dfrac{1}{2}\gamma^{+}u+\gamma^{+}\mathcal{R}u-\mathcal{V}T^{+}u+\mathcal{W}\gamma^{+}u&=\gamma^{+}\mathcal{P}f,\hspace{0.5em}\text{on}\hspace{0.5em}S,\\\label{ch53GT}
\dfrac{1}{2}T^{+}u+T^{+}\mathcal{R}u-\mathcal{W'}T^{+}u+\mathcal{L}^{+}\gamma^{+}u&=T^{+}\mathcal{P}f,\hspace{0.5em}\text{on}\hspace{0.5em}S.
\end{align}

For some distributions $f, \Psi$ and $\Psi$, we consider a more indirect integral relation associated with the third Green identity \eqref{ch53GV}
\begin{equation}\label{ch5G3ind}
u+\mathcal{R}u-V\Psi+W\Phi=\mathcal{P}f,\hspace{0.5em}\text{in}\hspace{0.5em}\Omega.
\end{equation}

\begin{lemma}\label{ch5L1} Let $u\in \mathcal{H}^{1}(\Omega)$, $f\in L_{2}(\omega;\Omega)$, $\Psi\in H^{-1/2}(S)$ and $\Phi\in H^{1/2}(S)$, satisfying the relation \eqref{ch5G3ind}. Let conditions \eqref{ch5conda(x)2} and \eqref{ch5conda(x)3} hold.  Then $u\in \mathcal{H}^{1,0}(\Omega, \mathcal{A})$, solves the equation $\mathcal{A}u=f$ in $\Omega$ and the following identity is satisfied
\begin{equation}\label{ch5lema1.0}
V(\Psi- T^{+}u) - W(\Phi- \gamma^{+}v) = 0,\hspace{0.5em}\text{in}\hspace{0.5em}\Omega.
\end{equation}
\end{lemma}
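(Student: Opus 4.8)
The plan is to exploit the third Green identity \eqref{ch5green3}, which is already established for any distribution $u\in\mathcal{H}^{1,0}(\Omega;\mathcal{A})$, as the template against which we compare the hypothesised relation \eqref{ch5G3ind}. The difficulty at the outset is that the hypotheses only place $u$ in $\mathcal{H}^{1}(\Omega)$, \emph{not} in $\mathcal{H}^{1,0}(\Omega;\mathcal{A})$, so the third Green identity is not yet available for $u$; establishing membership $\mathcal{A}u\in L^{2}(\omega;\Omega)$ is precisely the first thing to prove. To do this I would apply the operator $\mathcal{A}$ to both sides of \eqref{ch5G3ind} and examine each term. By Theorem \ref{ch5thmapPRH10}, $\mathcal{R}u\in\mathcal{H}^{1,0}(\Omega;\mathcal{A})$ and $\mathcal{P}f\in\mathcal{H}^{1,0}(\mathbb{R}^{3};\mathcal{A})$, so both have $L^{2}(\omega;\Omega)$ images under $\mathcal{A}$; by Corollary \ref{ch5thmapVWPRc} the surface potentials $V\Psi$ and $W\Phi$ also lie in $\mathcal{H}^{1,0}(\Omega;\mathcal{A})$. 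Consequently $\mathcal{A}u = \mathcal{A}(\mathcal{P}f - \mathcal{R}u + V\Psi - W\Phi)\in L^{2}(\omega;\Omega)$, which upgrades $u$ to $\mathcal{H}^{1,0}(\Omega;\mathcal{A})$ as claimed.

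The second step is to compute $\mathcal{A}u$ explicitly and identify it with $f$. Here I would use the fact that the parametrix-based volume potentials are built from the Laplace operators via the relations \eqref{ch4relP}--\eqref{ch4relTDL}, together with the key analytic facts already recorded in the proofs of Corollary \ref{ch5thmapVWPRc} and Theorem \ref{ch5thmapPRH10}: the single- and double-layer Laplace potentials are harmonic off $S$, so $\mathcal{A}V\Psi$ and $\mathcal{A}W\Phi$ reduce to first-order multiplier terms that cancel appropriately, while $\mathcal{A}\mathcal{P}f = f + (\text{lower-order terms})$ and $\mathcal{A}\mathcal{R}u$ supplies exactly the compensating terms. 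Applying $\mathcal{A}$ to \eqref{ch5G3ind} and collecting these contributions should leave $\mathcal{A}u=f$ in $\Omega$. This is the computational heart of the argument, and it mirrors the bounded-domain derivation in \cite{carloscomp,carlosL}; the only genuinely new ingredient is checking that every cancellation remains valid in the weighted $L^{2}(\omega;\Omega)$ setting, which is guaranteed by Conditions \eqref{ch5conda(x)2} and \eqref{ch5conda(x)3}.

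The final step is to derive the identity \eqref{ch5lema1.0}. Now that $u\in\mathcal{H}^{1,0}(\Omega;\mathcal{A})$ and $\mathcal{A}u=f$, the genuine third Green identity \eqref{ch53GV} holds for $u$, giving
\begin{equation*}
u+\mathcal{R}u-VT^{+}u+W\gamma^{+}u=\mathcal{P}f,\hspace{0.5em}\text{in}\hspace{0.5em}\Omega.
\end{equation*}
Subtracting this from the hypothesised relation \eqref{ch5G3ind} annihilates the $u$, $\mathcal{R}u$ and $\mathcal{P}f$ terms and leaves precisely $-V\Psi + W\Phi + VT^{+}u - W\gamma^{+}u = 0$, which is \eqref{ch5lema1.0} after rearranging (noting the paper's $\gamma^{+}v$ should read $\gamma^{+}u$).

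I expect the main obstacle to be the second step rather than the first or third: while the abstract mapping properties make the upgrade $u\in\mathcal{H}^{1,0}(\Omega;\mathcal{A})$ almost immediate, verifying that $\mathcal{A}u=f$ requires carefully tracking the first-order multiplier terms produced by differentiating the Laplace potentials and confirming they cancel in the weighted spaces. The subtraction argument of the third step, by contrast, is purely formal once the third Green identity is licensed for $u$. One should also take care that the duality-form interpretation of the potentials (flagged in the paragraph preceding Theorem \ref{ch5thmapVW}) is respected throughout, since several integrands need not be in $L^{1}$ on the unbounded domain.
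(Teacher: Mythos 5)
Your proposal is correct, and it departs from the paper's proof in two ways worth recording. First, for the membership $u\in\mathcal{H}^{1,0}(\Omega;\mathcal{A})$ you argue purely from the mapping theorems: each term on the right-hand side of \eqref{ch5G3ind} lies in $\mathcal{H}^{1,0}(\Omega;\mathcal{A})$ by Theorem \ref{ch5thmapPRH10} (for $\mathcal{P}f$ and $\mathcal{R}u$) and Corollary \ref{ch5thmapVWPRc} (for $V\Psi$ and $W\Phi$), and the space is linear, so $u$ inherits the membership. The paper instead re-derives this by hand: it writes $\mathcal{A}u=\Delta(au)-\nabla\cdot(u\,\nabla a)$, disposes of the second term via the multiplier properties of Remark \ref{ch5remmult}, and shows $\Delta u = f/a-\Delta\mathcal{R}u\in L^{2}(\omega;\Omega)$ using the potential relations and harmonicity of the layer potentials; your route is shorter and uses only results already established earlier in the paper. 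Second, you reverse the order of the remaining two claims. The paper (deferring to \cite[Lemma 5.1]{carloscomp}) subtracts the third Green identity from \eqref{ch5G3ind} \emph{before} knowing $\mathcal{A}u=f$, obtaining $V(\Psi-T^{+}u)-W(\Phi-\gamma^{+}u)=\mathcal{P}(f-\mathcal{A}u)$ in $\Omega$, and then applies the Laplacian: the left-hand side is harmonic in $\Omega$ while $\Delta\mathcal{P}(f-\mathcal{A}u)=(f-\mathcal{A}u)/a$, which yields $\mathcal{A}u=f$ and, substituting back, \eqref{ch5lema1.0} in one stroke. You instead prove $\mathcal{A}u=f$ first, by direct cancellation, and only then subtract the Green identities. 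The cancellation you sketch but do not carry out does close: in $\Omega$,
\begin{equation*}
\mathcal{A}V\Psi=\nabla a\cdot\nabla V\Psi,\qquad
\mathcal{A}W\Phi=\nabla a\cdot\nabla W\Phi,\qquad
\mathcal{A}\mathcal{P}f=f+\nabla a\cdot\nabla \mathcal{P}f,\qquad
\mathcal{A}\mathcal{R}u=\nabla a\cdot\nabla\mathcal{R}u+\nabla a\cdot\nabla u,
\end{equation*}
the last because \eqref{ch4relR} and $\Delta\mathcal{P}_{\Delta}h=h$ give $\Delta\mathcal{R}u=\nabla\cdot(u\,\nabla\ln a)-u\,\Delta\ln a=\nabla(\ln a)\cdot\nabla u$; applying $\mathcal{A}$ to \eqref{ch5G3ind} then yields $\mathcal{A}u=f+\nabla a\cdot\nabla\bigl(\mathcal{P}f-\mathcal{R}u+V\Psi-W\Phi\bigr)-\nabla a\cdot\nabla u=f$, the middle terms collapsing by \eqref{ch5G3ind} itself. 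As to what each approach buys: the paper's harmonicity argument obtains $\mathcal{A}u=f$ and \eqref{ch5lema1.0} simultaneously with no term-by-term bookkeeping, whereas yours isolates the PDE verification as an explicit, checkable identity and makes the final subtraction step purely formal. You are also right that $\gamma^{+}v$ in \eqref{ch5lema1.0} is a typo for $\gamma^{+}u$.
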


\begin{proof}
To prove that $u\in \mathcal{H}^{1,0}(\Omega;\mathcal{A})$, taking into account that by hypothesis $u\in \mathcal{H}^{1}(\Omega)$, so there is only left to prove that $\mathcal{A}u \in L^{2}(\omega; \Omega)$. 
Firstly we write the operator $\mathcal{A}$ as follows:
\begin{align*}
&\mathcal{A}(x)[u(x)]=\Delta(au)(x) - \sum_{i=1}^{3}\dfrac{\partial}{\partial x_{i}}\left(u\left(
\dfrac{\partial a(x)}{\partial x_{i}}\right)\right).
\end{align*}

It is easy to see that the second term belongs to $L^{2}(\omega; \Omega)$. Keeping in mind Remark \ref{ch5remmult} and the fact that $u\in \mathcal{H}^{1}(\Omega)$, then we can conclude that the term $u\nabla a\in \mathcal{H}^{1}(\Omega)$ since due to the second condition in \eqref{ch5conda(x)2} $\nabla a$ is a multiplier in the space $\mathcal{H}^{1}(\Omega)$ and therefore $\nabla(u\nabla a)\in L^{2}(\omega; \Omega)$.

Now, we only need to prove that $\Delta (au)\in L^{2}(\omega;\Omega)$. To prove this we look at the relation \eqref{ch5G3ind} and we put $u$ as the subject of the formula. Then, we use the potential relations \eqref{ch4relP},  \eqref{ch4relSL} and \eqref{ch4relDL} 
\begin{equation}\label{ch5lema1.1}
u=\mathcal{P}f-\mathcal{R}u+V\Psi-W\Phi=\mathcal{P}_{\Delta}\left(\dfrac{f}{a}\right)-\mathcal{R}u+V_{\Delta}\left(\dfrac{\Psi}{a}\right)-W_{\Delta}\Phi+V_{\Delta} \left(\dfrac{\partial(\ln(a))}{\partial n}\Phi\right)
\end{equation}

In virtue of the Theorem \ref{ch5thmapPRH10}, $\mathcal{R}u\in L^{2}(\omega;\Omega)$. Moreover, the terms in previous expression depending on $V_{\Delta}$ or $W_{\Delta}$ are harmonic functions and $\mathcal{P}_{\Delta}$ is the newtonian potential for the Laplacian, i.e. $\Delta\mathcal{P}_{\Delta}\left(\dfrac{f}{a}\right)=\dfrac{f}{a}$. Consequently, applying the Laplacian operator in both sides of \eqref{ch5lema1.1}, we obtain:
\begin{equation}\label{ch5lema1.2}
\Delta u = \dfrac{f}{a}-\Delta\mathcal{R}u. 
\end{equation}

 Thus, $\Delta u \in L^{2}(\omega; \Omega)$ from where it immediately follows that $\Delta (au)\in L^{2}(\omega; \Omega)$. Hence $u\in \mathcal{H}^{1,0}(\Omega;\mathcal{A})$. The rest of the proof is equivalent to \cite[Lemma 5.1]{carloscomp}.
\end{proof}

The proof of the following statement is the counterpart of \cite[Lemma 5.2]{carloscomp} for exterior domains. The proof follows from the invertibility of the operator $\mathcal{V}_{\Delta}$, see \cite[Corollary 8.13]{mclean}. 

\begin{lemma}\label{ch5L2}
Let $\Psi^{*}\in H^{-1/2}(S)$. If
\begin{equation}\label{ch5lema2i}
V\Psi^{*}(y) = 0, \hspace{0.5em}y\in\Omega,
\end{equation}
then $\Psi^{*}(y) = 0$.
\end{lemma}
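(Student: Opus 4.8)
The plan is to reduce the claim for the variable-coefficient single layer potential $V$ to the corresponding statement for the Laplace single layer potential $V_\Delta$, for which invertibility of the boundary operator $\mathcal{V}_\Delta$ is already available from \cite[Corollary 8.13]{mclean}. The key structural fact is the potential relation \eqref{ch4relSL}, namely $V\rho = V_\Delta(\rho/a)$, which lets me trade the hypothesis on $V$ for a hypothesis on $V_\Delta$ applied to a modified density.

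First I would apply relation \eqref{ch4relSL} to the hypothesis \eqref{ch5lema2i}: the assumption $V\Psi^{*}(y)=0$ for all $y\in\Omega$ becomes
\begin{equation*}
V_{\Delta}\left(\dfrac{\Psi^{*}}{a}\right)(y) = 0,\hspace{0.5em}y\in\Omega.
\end{equation*}
Next I would take the interior trace $\gamma^{+}$ of this identity on $S$. Since $V_\Delta(\Psi^{*}/a)$ is the harmonic single layer potential and the density $\Psi^{*}/a$ lies in $H^{-1/2}(S)$ by Remark \ref{ch5remmult} and Condition \ref{cond1}, its trace equals the direct-value boundary operator evaluated at the same density, i.e. $\gamma^{+}V_\Delta(\Psi^{*}/a) = \mathcal{V}_\Delta(\Psi^{*}/a)$ on $S$. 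Because $V_\Delta(\Psi^{*}/a)$ vanishes identically on the exterior domain $\Omega$, its trace from $\Omega^{+}$ is zero, so I obtain
\begin{equation*}
\mathcal{V}_{\Delta}\left(\dfrac{\Psi^{*}}{a}\right) = 0 \quad\text{on } S.
\end{equation*}

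I would then invoke the invertibility of $\mathcal{V}_\Delta : H^{-1/2}(S)\to H^{1/2}(S)$ from \cite[Corollary 8.13]{mclean}, which forces $\Psi^{*}/a = 0$ on $S$. Since $a(x)>C_1>0$ by Condition \ref{cond0}, multiplying through by $a$ yields $\Psi^{*}=0$, completing the argument. The one point that needs care, rather than being a genuine obstacle, is justifying the trace identity $\gamma^{+}V_\Delta = \mathcal{V}_\Delta$ in the weighted-space/exterior-domain setting: in the classical bounded-domain theory this is the standard continuity of the single layer potential across $S$, and the same holds here because $V_\Delta(\Psi^{*}/a)\in\mathcal{H}^{1}(\Omega;\Delta)$ has a well-defined trace in $H^{1/2}(S)$ by the mapping property used in Theorem \ref{ch5thmapVW}. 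Everything else is algebraic, so I expect no serious difficulty; the proof is genuinely the exterior-domain transcription of \cite[Lemma 5.2]{carloscomp} as the paper indicates.
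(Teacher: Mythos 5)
Your proposal is correct and follows essentially the same route as the paper's own proof: apply the potential relation \eqref{ch4relSL}, take the trace to get $\mathcal{V}_{\Delta}(\Psi^{*}/a)=0$ on $S$, and invoke the invertibility of $\mathcal{V}_{\Delta}$ from \cite[Corollary 8.13]{mclean} to conclude $\Psi^{*}=0$. The extra care you take in justifying the trace identity and the final division by $a$ only makes explicit what the paper leaves implicit.
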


\begin{proof}
Take the trace of \eqref{ch5lema2i} and relation \eqref{ch4relSL}, to obtain 
\begin{equation}\label{ch5lema2i2}
\mathcal{V}\Psi^{*}(y) = \mathcal{V}_{\Delta}\left(\dfrac{\Psi^{*}}{a}\right)(y)=0, \hspace{0.5em}y\in\,\, S.
\end{equation}
Then, applying \cite[Corollary 8.13]{mclean}, we obtain that the equation \eqref{ch5lema2i2} is uniquely solvable. Hence, $\Psi^{*}(y) = 0$.
\end{proof}
\section{BDIES}
In this section, we will derive a system of boundary domain integral equations formally segregated from the solution $u$ of the BVP \eqref{ch5BVP1}-\eqref{ch5BVPN}, following a similar approach as in \cite[Section 5]{mikhailov1}. Consequently, we introduce $\Phi_{0} \in H^{1/2}(S)$ and $\Psi_{0}\in H^{-1/2}(S)$ as continuous fixed extensions to $S$ of the functions $\phi_{0}\in H^{1/2}(S_{D}) $ and $\psi_{0}\in H^{-1/2}(S_{N})$. Moreover, let $\phi\in\widetilde{H}^{1/2}(S_{N})$ and $\psi\in\widetilde{H}^{-1/2}(S_{D})$ be arbitrary functions formally segregated from $u$. Then, make 
\begin{equation}\label{ch5phipsi}
\gamma^{+}u = \Phi_{0} + \phi, \quad\quad\quad T^{+}u = \Psi_{0} + \psi,\quad \text{on}\quad S;
\end{equation}
in the three third Green identities \eqref{ch53GV}-\eqref{ch53GT} to obtain the to obtain the following BDIES (M12)
\begin{subequations}
\begin{align}
u+\mathcal{R}u-V\psi+W\phi&=F_{0},\hspace{0.5em}\text{in}\hspace{0.1em}\,\,\,\Omega,\label{ch5SM12v}\\
\dfrac{1}{2}\phi+\gamma^{+}\mathcal{R}u-\mathcal{V}\psi+\mathcal{W}\phi&=\gamma^{+}F_{0}-\Phi_{0},\label{ch5SM12g}\hspace{0.5em}\text{on}\hspace{0.1em}\,\,\,S.
\end{align}
\end{subequations}
In what follows, we will denote by $\mathcal{X}$ the vector of unknown functions
 \[\mathcal{X} = (u,\psi,\phi)^{\top}\in \mathbb{H}:=\mathcal{H}^{1,0}(\Omega; \mathcal{A})\times \widetilde{H}^{-1/2}(S_{D})\times \widetilde{H}^{1/2}(S_{N})\subset \mathbb{X}\]
where $\mathbb{X} := \mathcal{H}^{1}(\Omega)\times \widetilde{H}^{-1/2}(S_{D})\times \widetilde{H}^{1/2}(S_{N}).$
We will denote by $\mathcal{M}^{12}$ the matrix operator that defines the system $(M12)$:
\begin{equation}
   \mathcal{M}^{12}=
  \left[ {\begin{array}{ccc}
   I+\mathcal{R} & -V & +W \\
   \gamma^{+}\mathcal{R} & -\mathcal{V} & \dfrac{1}{2}I + \mathcal{W} 
  \end{array} } \right],
\end{equation}
and by $\mathcal{F}^{12}$ the right hand side of the system $\mathcal{F}^{12} = [ \,F_{0},\,\, \gamma^{+}F_{0}-\Phi_{0}\,]^{\top}.$

Using this notation, the system (M12) can be rewritten in terms of matrix notation as $\mathcal{M}^{12}\mathcal{X} = \mathcal{F}^{12}$.

If Condition \ref{ch5conda(x)2} and Condition \ref{ch5conda(x)3} hold, then, due to the mapping properties of the potentials, $\mathcal{F}^{12}\in \mathbb{F}^{12}\subset \mathbb{Y}^{12}$, while operators $\mathcal{M}^{12}:\mathbb{H}\rightarrow \mathbb{F}^{12}$ and $\mathcal{M}^{12}:\mathbb{X}\rightarrow \mathbb{Y}^{12}$ are continuous. Here, we denote
\begin{align*}
\mathbb{F}^{12}&:=\mathcal{H}^{1,0}(\Omega, \mathcal{A})\times H^{1/2}(S),&
\mathbb{Y}^{12}&:=\mathcal{H}^{1}(\Omega)\times H^{1/2}(S).
\end{align*}

The following result shows that the BDIES (M12) is equivalent to the original mixed BVP \eqref{ch5BVP1}-\eqref{ch5BVPN}. 

\begin{theorem}\label{ch5equivalence}
Let $f\in L_{2}(\omega; \Omega)$, let $\Phi_{0}\in H^{-1/2}(S)$ and let $\Psi_{0}\in H^{-1/2}(S)$ be some fixed extensions of $\phi_{0}\in H^{1/2}(S_{D})$ and $\psi_{0}\in H^{-1/2}(S_{N})$, respectively. Let Condition \ref{ch5conda(x)2} and Condition \ref{ch5conda(x)3} hold. Then, 
\begin{enumerate}
\item[i)] if some $u\in \mathcal{H}^{1,0}(\Omega;\mathcal{A})$ solves the BVP \eqref{ch5BVP1}-\eqref{ch5BVPN}, then the triplet $(u, \psi,\phi )^{\top}\in \mathcal{H}^{1,0}(\Omega;\mathcal{A})\times\widetilde{H}^{-1/2}(S_{D})\times\widetilde{H}^{1/2}(S_{N})$ where
\[
\phi=\gamma^{+}u-\Phi_{0},\hspace{2em}\psi=T^{+}u-\Psi_{0}, \quad \quad \text{on}\,\,\, S,
\]
solves the BDIES (M12). 
\item[ii)] If a triple $(u, \psi, \phi )^{\top}\in \mathcal{H}^{1,0}(\Omega;\mathcal{A})\times\widetilde{H}^{-1/2}(S_{D})\times\widetilde{H}^{1/2}(S_{N})$ solves the BDIES (M12), then this solution is unique. Furthermore, $u$ solves the BVP \eqref{ch5BVP1}-\eqref{ch5BVPN} and the functions $\psi, \phi$ satisfy
\begin{equation}\label{ch5eqcond}
\phi=\gamma^{+}u-\Phi_{0},\hspace{2em}\psi=T^{+}u-\Psi_{0}, \quad \quad \text{on}\,\,\, S.
\end{equation} 
\end{enumerate}
\end{theorem}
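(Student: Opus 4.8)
The plan is to prove the two directions separately, following the standard equivalence template for BDIE systems (as in \cite{mikhailov1, carloscomp}) but adapted to the weighted Sobolev setting. For part (i), I would start from a solution $u\in\mathcal{H}^{1,0}(\Omega;\mathcal{A})$ of the mixed BVP \eqref{ch5BVP1}-\eqref{ch5BVPN} and simply verify that the designated triple solves the system. Since $u$ satisfies $\mathcal{A}u=f$, the third Green identity \eqref{ch53GV} holds, and the boundary identity \eqref{ch53GG} follows by taking the trace. Substituting the segregation relations $\gamma^{+}u=\Phi_{0}+\phi$ and $T^{+}u=\Psi_{0}+\psi$ with the prescribed $\phi,\psi$ into \eqref{ch53GV} and \eqref{ch53GG} reproduces \eqref{ch5SM12v}-\eqref{ch5SM12g} by the very construction of $F_{0}$. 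I would also check the membership conditions: the Dirichlet condition \eqref{ch5BVPD} forces $r_{S_{D}}\phi=0$, so $\phi\in\widetilde{H}^{1/2}(S_{N})$, and the Neumann condition \eqref{ch5BVPN} gives $r_{S_{N}}\psi=0$, so $\psi\in\widetilde{H}^{-1/2}(S_{D})$. This direction is essentially bookkeeping.

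For part (ii), suppose a triple $(u,\psi,\phi)^{\top}\in\mathbb{H}$ solves (M12). Equation \eqref{ch5SM12v} is precisely the indirect relation \eqref{ch5G3ind} with $\Psi=\psi$ and $\Phi=\phi$ (noting $F_{0}=\mathcal{P}f-W\Phi_{0}+V\Psi_{0}$), so Lemma \ref{ch5L1} applies: it yields $u\in\mathcal{H}^{1,0}(\Omega;\mathcal{A})$, that $\mathcal{A}u=f$ in $\Omega$, and the auxiliary identity \eqref{ch5lema1.0}. Since $u$ now lies in $\mathcal{H}^{1,0}(\Omega;\mathcal{A})$ and satisfies the PDE, its own third Green identity \eqref{ch53GV} is valid. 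Subtracting \eqref{ch53GV} from \eqref{ch5SM12v} eliminates $u$, $\mathcal{R}u$ and $\mathcal{P}f$, leaving
\begin{equation}
V(\Psi_{0}+\psi-T^{+}u)-W(\Phi_{0}+\phi-\gamma^{+}u)=0,\hspace{0.5em}\text{in}\hspace{0.5em}\Omega.
\end{equation}
Writing $\Psi^{*}:=\Psi_{0}+\psi-T^{+}u$ and $\Phi^{*}:=\Phi_{0}+\phi-\gamma^{+}u$, I would take the trace and the conormal derivative of this identity on $S$ to produce a $2\times2$ system in $(\Psi^{*},\Phi^{*})$ involving the boundary operators $\mathcal{V},\mathcal{W},\mathcal{W}',\mathcal{L}^{+}$. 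The standard manoeuvre is then to show $\Phi^{*}=0$ by examining the jump relations of $W\Phi^{*}$ across $S$, after which the identity collapses to $V\Psi^{*}=0$ in $\Omega$, and Lemma \ref{ch5L2} forces $\Psi^{*}=0$. These two conclusions are exactly the relations \eqref{ch5eqcond}, and combined with $r_{S_{D}}\gamma^{+}u=r_{S_{D}}\Phi_{0}=\phi_{0}$ and $r_{S_{N}}T^{+}u=r_{S_{N}}\Psi_{0}=\psi_{0}$ (using $r_{S_{D}}\phi=0$, $r_{S_{N}}\psi=0$ from the space memberships) they show that $u$ solves the mixed BVP.

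Uniqueness of the BDIES solution then follows from uniqueness of the BVP: Theorem \ref{ch5thinv0} guarantees the mixed problem has a unique solution $u$, and once $u$ is fixed, \eqref{ch5eqcond} determines $\psi$ and $\phi$ uniquely. I expect the main obstacle to be the middle step of part (ii), namely rigorously deducing $\Phi^{*}=0$ and $\Psi^{*}=0$ from the single domain identity $V\Psi^{*}-W\Phi^{*}=0$. In a bounded domain one exploits the jump relations of the double-layer potential together with invertibility of $\mathcal{V}_{\Delta}$; here the same tools are available through the potential relations \eqref{ch4relSL}-\eqref{ch4relDL} and Lemma \ref{ch5L2}, but one must be careful that the jump computations and the decay at infinity are legitimate in the weighted spaces $\mathcal{H}^{1}(\Omega)$ and $\mathcal{H}^{1,0}(\Omega;\mathcal{A})$ rather than in the classical $H^{1}$ setting. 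Since the operators reduce to their Laplace counterparts times multipliers, I would lean on the exterior-domain mapping and jump results of \cite{exterior} to justify each manipulation, so that the argument of \cite[Theorem 5.1]{carloscomp} carries over essentially verbatim.
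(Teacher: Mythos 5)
Your part (i) and your closing uniqueness argument match the paper, and your use of Lemma \ref{ch5L1} to obtain $\mathcal{A}u=f$ together with the identity $V\Psi^{*}-W\Phi^{*}=0$ in $\Omega$ is also the paper's route (modulo a slip: the lemma must be applied with $\Psi=\psi+\Psi_{0}$ and $\Phi=\phi+\Phi_{0}$, not $\Psi=\psi$, $\Phi=\phi$; your displayed identity is nevertheless the correct one). The genuine gap is in the next step: you never use the boundary equation \eqref{ch5SM12g} in part (ii), and without it $\Phi^{*}=0$ cannot be deduced. The single homogeneous relation $V\Psi^{*}-W\Phi^{*}=0$ in $\Omega$ has a nontrivial solution set: by the potential relations \eqref{ch4relSL} and \eqref{ch4relDL} it is equivalent to $V_{\Delta}\bigl((\Psi^{*}+\Phi^{*}\partial_{n}a)/a\bigr)-W_{\Delta}\Phi^{*}=0$ in $\Omega$, and for any nonzero harmonic function $v$ in the complementary bounded domain $\Omega^{-}$, the third Green identity for $\Omega^{-}$ produces exactly such a relation in $\Omega^{+}$ with the (nonzero) Cauchy data of $v$ as densities. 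For the same reason, the $2\times 2$ homogeneous system you propose to form by taking $\gamma^{+}$ and $T^{+}$ of the identity is not injective: it is the Calder\'on-type relation associated with the complementary domain, and its kernel consists precisely of those Cauchy-data pairs. The jump relations of $W$ cannot rescue this, because the identity is known to hold only on the $\Omega^{+}$ side of $S$; you have no information on $\gamma^{-}$ or $T^{-}$ of $V\Psi^{*}-W\Phi^{*}$, hence no access to the jumps.

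What closes the gap, and is the paper's actual argument, is the step you skipped: take the trace of the domain equation \eqref{ch5SM12v}, using $\gamma^{+}V=\mathcal{V}$ and $\gamma^{+}W=-\frac{1}{2}I+\mathcal{W}$, and subtract it from the boundary equation \eqref{ch5SM12g}. All potential terms cancel and one is left with $\phi=\gamma^{+}u-\Phi_{0}$ on $S$, i.e.\ $\Phi^{*}=0$. Only after this does the identity from Lemma \ref{ch5L1} collapse to $V\Psi^{*}=0$ in $\Omega$, at which point Lemma \ref{ch5L2} gives $\Psi^{*}=0$; the Dirichlet and Neumann conditions then follow from $r_{S_{D}}\phi=0$ and $r_{S_{N}}\psi=0$, and uniqueness from Theorem \ref{ch5thinv0}, exactly as you describe.
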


\begin{proof}
The proof of item $i)$ automatically follows from the derivation of the BDIES (M12).

Let us prove now item $ii)$. Let the triple $(u, \psi,\phi )^{\top}\in (u, \psi, \phi )^{\top}\in \mathcal{H}^{1,0}(\Omega;\mathcal{A})\times\widetilde{H}^{-1/2}(S_{D})\times\widetilde{H}^{1/2}(S_{N})$ solve the BDIE system. Taking the trace of the equation \eqref{ch5SM12v} and substract it from the equation \eqref{ch5SM12g}, we obtain
\begin{equation}\label{ch5M12a1}
\phi=\gamma^{+}u-\Phi_{0}, \hspace{1em} \text{on}\hspace{0.5em}S.
\end{equation}
This means that the first condition in \eqref{ch5eqcond} is satisfied. 
Now, restricting equation \eqref{ch5M12a1} to $S_{D}$, we observe that $\phi$ vanishes as $supp(\phi)\subset S_{N}$. Hence, $\phi_{0}=\Phi_{0}=\gamma^{+}u$ on $S_{D}$ and consequently, the Dirichlet condition of the BVP \eqref{ch5BVPD} is satisfied. 

We proceed using the Lemma \ref{ch5L1} in equation  \eqref{ch5SM12v}, with $\Psi=\psi + \Psi_{0}$ and $\Phi = \phi + \Phi_{0}$ which implies that $u$ is a solution of the equation \eqref{ch5BVP1} and also the following equality:
\begin{equation}\label{ch5M12a2}
V(\Psi_{0}+\psi - T^{+}u) - W(\Phi_{0} + \phi -\gamma^{+}u) = 0 \text{ in } \Omega.
\end{equation}
In virtue of \eqref{ch5M12a1}, the second term of the previous equation vanishes. Hence,
\begin{equation}\label{ch5M12a3}
V(\Psi_{0}+\psi - T^{+}u)= 0, \quad \text{ in } \Omega.
\end{equation}
Now, in virtue of Lemma \ref{ch5L2} we obtain
\begin{equation}\label{ch5M12a4}
\Psi_{0} + \psi - T^{+}u = 0,\quad\text{ on } S.
\end{equation}
Since $\psi$ vanishes on $S_{N}$, we can conclude $\Psi_{0}=\psi_{0}$ on $S_{N}$. Consequently, equation \eqref{ch5M12a4} implies that $u$ satisfies the Neumann condition \eqref{ch5BVPN}. \end{proof}

\section{Representation Theorems and Invertibility}
In this section, we aim to prove the invertibility of the operator $\mathcal{M}^{12}:\mathbb{H}\rightarrow \mathbb{F}^{12}$ by showing first that the arbitrary right hand side $\mathbb{F}^{12}$ from the respective spaces can be represented in terms of the parametrix-based potentials and using then the equivalence theorems. 

The following result is the counterpart of \cite[Lemma 7.1]{exterior} for the new parametrix $P^{x}(x,y)$. The analogous result for bounded domains can be found in \cite[Lemma 3.5]{mikhailov1}.

\begin{lemma}\label{ch5L3} For any function $\mathcal{F}_{*}\in \mathcal{H}^{1,0}(\Omega;\mathcal{A})$, there exists a unique couple $(f_{*}, \Psi_{*}) = \mathcal{C}\mathcal{F}_{*}\in L^{2}(\omega ; \Omega) \times H^{-1/2}(S)$ such that
\begin{equation}\label{ch5Lrep1}
\mathcal{F}_{*}(y)=\mathcal{P}f_{*}(y) + V\Psi_{*}(y), \quad y\in \Omega, 
\end{equation}
where $\mathcal{C}:\mathcal{H}^{1,0}(\Omega;\mathcal{A})\rightarrow L^{2}(\omega ; \Omega)\times H^{-1/2}(S)$ is a linear continuous operator. 
\end{lemma}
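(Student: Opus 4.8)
The plan is to reduce the parametrix-based potentials to their Laplace counterparts via \eqref{ch4relP} and \eqref{ch4relSL}, and then to determine $f_*$ and $\Psi_*$ in turn. First I would pin down $f_*$: assuming \eqref{ch5Lrep1} holds and writing $\mathcal{P}f_* = \mathcal{P}_\Delta(f_*/a)$ and $V\Psi_* = V_\Delta(\Psi_*/a)$, I apply $\Delta$ to both sides. The single-layer term is harmonic in $\Omega$ and drops out, while $\Delta\mathcal{P}_\Delta(f_*/a) = f_*/a$, so necessarily $f_* = a\Delta\mathcal{F}_*$. Using \eqref{ch5cordA} I rewrite this as $f_* = \mathcal{A}\mathcal{F}_* - \nabla a\cdot\nabla\mathcal{F}_*$. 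Since $\mathcal{F}_*\in\mathcal{H}^{1,0}(\Omega;\mathcal{A})$ gives $\mathcal{A}\mathcal{F}_*\in L^2(\omega;\Omega)$, and since $\omega\nabla a\in L^\infty(\mathbb{R}^3)$ from \eqref{ch5conda(x)2} makes $\omega(\nabla a\cdot\nabla\mathcal{F}_*) = (\omega\nabla a)\cdot\nabla\mathcal{F}_*\in L^2(\Omega)$, I conclude $f_*\in L^2(\omega;\Omega)$ with $\|f_*\|_{L^2(\omega;\Omega)}\leq C\|\mathcal{F}_*\|_{\mathcal{H}^{1,0}(\Omega;\mathcal{A})}$.

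Next I would isolate the single-layer part. Set $G := \mathcal{F}_* - \mathcal{P}f_*$. By Theorem \ref{ch5thmapPRH10}, $\mathcal{P}f_*\in\mathcal{H}^{1,0}(\mathbb{R}^3;\mathcal{A})$, hence $G\in\mathcal{H}^1(\Omega)$; moreover $\Delta G = \Delta\mathcal{F}_* - \Delta\mathcal{P}_\Delta(f_*/a) = \Delta\mathcal{F}_* - f_*/a = 0$ in $\Omega$, so $G$ is harmonic in $\Omega$. It remains to represent $G$ as $V\Psi_* = V_\Delta(\Psi_*/a)$, i.e. to find $\mu$ with $V_\Delta\mu = G$ and then set $\Psi_* := a\mu$. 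To produce $\mu$ I solve the interior Dirichlet problem $\Delta G^- = 0$ in $\Omega^-$, $\gamma^- G^- = \gamma^+ G$ on $S$; gluing $G$ on $\Omega^+$ and $G^-$ on $\Omega^-$ yields a function $\widehat G$ harmonic in $\mathbb{R}^3\setminus S$ with continuous trace across $S$, and I take $\mu := T^-_\Delta\widehat G - T^+_\Delta\widehat G\in H^{-1/2}(S)$, the jump of its normal derivative. The difference $V_\Delta\mu - \widehat G$ is then harmonic on both sides of $S$ with continuous Cauchy data, hence harmonic across $S$ and decaying, so it vanishes in $\mathcal{H}^1(\mathbb{R}^3)$; thus $V_\Delta\mu = G$ in $\Omega$. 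Since $a$ and $1/a$ are multipliers on $H^{-1/2}(S)$ by Remark \ref{ch5remmult}, $\Psi_* := a\mu\in H^{-1/2}(S)$ satisfies $V\Psi_* = V_\Delta(\Psi_*/a) = V_\Delta\mu = G$, giving \eqref{ch5Lrep1}.

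For uniqueness, $f_*$ is already forced by the first step, so $G = \mathcal{F}_* - \mathcal{P}f_*$ and hence $V\Psi_*$ are uniquely determined; if two densities produced the same $V\Psi_*$ in $\Omega$, their difference $\Psi^*$ would satisfy $V\Psi^* = 0$ in $\Omega$, whence $\Psi^* = 0$ by Lemma \ref{ch5L2}. Linearity of $\mathcal{C}:\mathcal{F}_*\mapsto(f_*,\Psi_*)$ is clear from the construction, and continuity follows by chaining the estimates: the bound on $f_*$ from the first step, the continuity of $\mathcal{P}$ (Theorem \ref{ch5thmapPRH10}) controlling $G$ in $\mathcal{H}^1(\Omega)$, the boundedness of the interior Dirichlet solution operator and of the conormal-derivative maps controlling $\mu$ in $H^{-1/2}(S)$, and finally the multiplier bound for $\Psi_* = a\mu$.

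I expect the main obstacle to be the single-layer representation of the exterior harmonic function $G$ in the weighted setting. Unlike the bounded-domain case of \cite[Lemma 3.5]{mikhailov1}, one must verify that the glued function $\widehat G$ belongs to $\mathcal{H}^1(\mathbb{R}^3\setminus S)$ and that the Liouville-type vanishing of $V_\Delta\mu - \widehat G$ genuinely holds, which relies on the fact that nonzero constants (and more generally nonzero harmonic functions) do not belong to $\mathcal{H}^1(\mathbb{R}^3)$ in three dimensions. This is exactly where the exterior weighted-space potential theory of \cite{exterior}, and in particular the counterpart \cite[Lemma 7.1]{exterior}, is indispensable; the remaining steps are routine bookkeeping with the reduction relations \eqref{ch4relP}, \eqref{ch4relSL} and the multiplier properties of $a$ and $1/a$.
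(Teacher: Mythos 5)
Your proof is correct, but you produce the single\-/layer density by a genuinely different route than the paper. The first step coincides: applying $\Delta$ to \eqref{ch5Lrep1} after the reductions \eqref{ch4relP}, \eqref{ch4relSL} forces $f_{*}=a\Delta\mathcal{F}_{*}$, and your explicit verification that $f_{*}=\mathcal{A}\mathcal{F}_{*}-\nabla a\cdot\nabla\mathcal{F}_{*}\in L^{2}(\omega;\Omega)$ using $\omega\nabla a\in L^{\infty}(\mathbb{R}^{3})$ is a detail the paper leaves implicit but which is needed for the target space of $\mathcal{C}$ and for its continuity. From there the paper stays entirely on the boundary: it takes the trace of $V_{\Delta}(\Psi_{*}/a)=Q:=\mathcal{F}_{*}-\mathcal{P}_{\Delta}(\Delta\mathcal{F}_{*})$ and invokes the invertibility of $\mathcal{V}_{\Delta}:H^{-1/2}(S)\rightarrow H^{1/2}(S)$ (\cite[Corollary 8.13]{mclean}) to \emph{define} $\Psi_{*}:=a\mathcal{V}_{\Delta}^{-1}\gamma^{+}Q$, and then closes the existence part by observing that $V_{\Delta}(\Psi_{*}/a)$ and $Q$ are harmonic functions in the weighted space with equal traces, hence equal by uniqueness of the exterior Dirichlet problem \cite[Theorem 3.1]{exterior}; this yields the closed formula \eqref{ch5Lrep8} for $\mathcal{C}$, from which linearity and continuity are immediate. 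You instead build the density constructively: solve the interior Dirichlet problem in $\Omega^{-}$, glue to get $\widehat{G}$ harmonic off $S$ with continuous trace, take $\mu$ as the jump of its normal derivatives, and prove $V_{\Delta}\mu=G$ by a transmission-plus-Liouville argument (a harmonic function in $\mathcal{H}^{1}(\mathbb{R}^{3})$ vanishes, since nonzero constants fail the weighted $L^{2}$ condition in 3D); uniqueness you obtain from Lemma \ref{ch5L2}, i.e.\ from injectivity of $V$, which is itself proved via $\mathcal{V}_{\Delta}^{-1}$, so the two arguments ultimately rest on the same ingredient. What each buys: the paper's route is shorter and gives $\mathcal{C}$ in explicit closed form, which makes the boundedness claim a one-line consequence; your route is the classical ``representation by the jump of Cauchy data'' theorem, more constructive in spirit and transplantable to settings where $\mathcal{V}_{\Delta}^{-1}$ is not directly quotable, at the price of an auxiliary interior BVP, a sign-convention check in the jump relation $T^{\mp}_{\Delta}V_{\Delta}\mu=\pm\tfrac{1}{2}\mu+\mathcal{W}'_{\Delta}\mu$ (your choice $\mu=T^{-}_{\Delta}\widehat{G}-T^{+}_{\Delta}\widehat{G}$ is the right one for the paper's orientation of $n$), and the verification that the glued $\widehat{G}$ and the difference $V_{\Delta}\mu-\widehat{G}$ genuinely lie in $\mathcal{H}^{1}(\mathbb{R}^{3})$, which you correctly flag as the crux in the unbounded setting.
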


\begin{proof}
Let us assume that such functions $f_{*}$ and $\Psi_{*}$, satisfying \eqref{ch5Lrep1}, exist. Then, we aim to find expressions of these functions in terms of $\mathcal{F}_{*}$. Applying the potential relations \eqref{ch4relSL}, \eqref{ch4relP} to the equation \eqref{ch5Lrep1}, we obtain
\begin{equation}\label{ch5Lrep2}
\mathcal{F}_{*}(y)=\mathcal{P}_{\Delta}\left(\dfrac{f_{*}}{a}\right)(y)
 + V_{\Delta}\left(\dfrac{\Psi_{*}}{a}\right)(y), \quad y\in \Omega. 
\end{equation}
Applying the Laplace operator at both sides of the equation \eqref{ch5Lrep2}, we get
\begin{equation}\label{ch5Lrep3}
f_{*}= a\Delta \mathcal{F}_{*}.
\end{equation}
On the other hand, we can rewrite equation \eqref{ch5Lrep2}
as
\begin{equation}\label{ch5Lrep4}
V_{\Delta}\left(\dfrac{\Psi_{*}}{a}\right)(y) = Q(y), \quad y\in \Omega,
\end{equation}
where 
\begin{equation}\label{ch5Lrep5}
Q(y):=\mathcal{F}_{*}(y)- \mathcal{P}_{\Delta}\left( \Delta \mathcal{F}_{*}\right)(y).
\end{equation}
Now, we take the trace of \eqref{ch5Lrep4}
\begin{equation}\label{ch5Lrep6}
\mathcal{V}_{\Delta}\left(\dfrac{\Psi_{*}}{a}\right)(y) = \gamma^{+}Q(y), \quad y\in S.
\end{equation}
It is well known that the direct value operator of the single layer potential for the Laplace equation $\mathcal{V}_{\Delta}:H^{-1/2}(S)\longrightarrow H^{1/2}(S)$ is invertible (cf. e.g. \cite[Corollary 8.13]{mclean}). Hence, we obtain the following expresion for $\Psi_{*}$:
\begin{equation}\label{ch5Lrep7}
\Psi_{*}(y) = a\mathcal{V}^{-1}_{\Delta}\gamma^{+}Q(y), \quad y\in S.
\end{equation}
Relations \eqref{ch5Lrep3} and \eqref{ch5Lrep7} imply the uniqueness of the couple $(f_{*},\Psi_{*})$. 

Now, we just simply need to prove that the pair $(f_{*},\Psi_{*})$ given by \eqref{ch5Lrep7} and \eqref{ch5Lrep3} satisfies \eqref{ch5Lrep1}. For this purpose, let us note that the single layer potential operator, $V_{\Delta}(\Psi_{*}/a)$ with $\Psi_{*}$ given by \eqref{ch5Lrep7}, as well as $Q(y)$ given by \eqref{ch5Lrep5} are both harmonic functions. 
Since $Q(y)$ and $V_{\Delta}(\Psi_{*}/a)$ are two harmonic functions that coincide on the boundary due to \eqref{ch5Lrep6}, then they must be identical in the whole $\Omega$ due to the uniqueness of solution to the Dirichlet problem for the Laplace equation, see \cite[Theorem 3.1]{exterior}. As a consequence, \eqref{ch5Lrep4} is true which implies \eqref{ch5Lrep1}. Thus, relations \eqref{ch5Lrep3}, \eqref{ch5Lrep5} and \eqref{ch5Lrep7} give
\begin{equation}\label{ch5Lrep8}
(f_{*}, \Psi_{*}) = \mathcal{C}\mathcal{F}_{*} := (a\Delta \mathcal{F}_{*}, a\mathcal{V}^{-1}_{\Delta}\gamma^{+}[\mathcal{F}_{*} -\mathcal{P}_{\Delta}(a\Delta \mathcal{F}_{*})]).
\end{equation}
Since all the operators involved in the definition \eqref{ch5Lrep8} of the operator $\mathcal{C}$ are continuous and linear, the operator $\mathcal{C}$ is also continuous and linear.
\end{proof}

\begin{corollary}\label{ch5cor71} Let \[(\mathcal{F}_{0},\mathcal{F}_{1})\in\mathcal{H}^{1,0}(\Omega ; \mathcal{A})\times H^{1/2}(\partial \Omega).\] Then there exists a unique triplet $(f_{*}, \Psi_{*},\Phi_{*})$ such that  $(f_{*}, \Psi_{*},\Phi_{*}) = \mathcal{C}_{*}(\mathcal{F}_{0},\mathcal{F}_{1})^{\top}$, where $\mathcal{C}_{*}:\mathcal{H}^{1,0}(\Omega, \mathcal{A})\times H^{1/2}(S)\rightarrow L^{2}(\omega;\Omega)\times H^{-1/2}(S) \times H^{1/2}(S)$ is a linear an bounded operator and $(\mathcal{F}_{0},\mathcal{F}_{1})$ are given by
\begin{align}
\mathcal{F}_{0} &= \mathcal{P}f_{*} +V\Psi_{*}-W\Phi_{*}\quad\text{in}\hspace{0.5em}\Omega\label{ch5cor71a}\\
\mathcal{F}_{1} &=\gamma^{+}\mathcal{F}_{0}-\Phi_{*}\quad\text{on}\hspace{0.5em}S\label{ch5cor71b}
\end{align}
\end{corollary}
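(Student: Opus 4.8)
The plan is to reduce the statement to Lemma \ref{ch5L3} by first isolating $\Phi_*$ from the boundary equation \eqref{ch5cor71b} and then absorbing the double-layer term $-W\Phi_*$ into the left-hand side of \eqref{ch5cor71a}. First I would observe that equation \eqref{ch5cor71b} determines $\Phi_*$ explicitly and uniquely, namely
\begin{equation*}
\Phi_* = \gamma^{+}\mathcal{F}_0 - \mathcal{F}_1.
\end{equation*}
Since $\mathcal{F}_0\in\mathcal{H}^{1,0}(\Omega;\mathcal{A})$, the trace theorem gives $\gamma^{+}\mathcal{F}_0\in H^{1/2}(S)$, and together with $\mathcal{F}_1\in H^{1/2}(S)$ this shows $\Phi_*\in H^{1/2}(S)$, with continuous dependence on the data.

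Next, substituting this $\Phi_*$ into \eqref{ch5cor71a} and moving the double-layer term to the left, I would set
\begin{equation*}
\widehat{\mathcal{F}} := \mathcal{F}_0 + W\Phi_*,
\end{equation*}
so that \eqref{ch5cor71a} becomes $\widehat{\mathcal{F}}=\mathcal{P}f_* + V\Psi_*$, which is exactly the representation treated in Lemma \ref{ch5L3}. The crucial point is that $\widehat{\mathcal{F}}$ lies in $\mathcal{H}^{1,0}(\Omega;\mathcal{A})$: indeed $\mathcal{F}_0$ belongs there by hypothesis, while by Corollary \ref{ch5thmapVWPRc} the operator $W:H^{1/2}(S)\to\mathcal{H}^{1,0}(\Omega;\mathcal{A})$ is continuous, so $W\Phi_*\in\mathcal{H}^{1,0}(\Omega;\mathcal{A})$ as well. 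Lemma \ref{ch5L3} then yields a unique couple $(f_*,\Psi_*)=\mathcal{C}\widehat{\mathcal{F}}\in L^{2}(\omega;\Omega)\times H^{-1/2}(S)$ satisfying $\widehat{\mathcal{F}}=\mathcal{P}f_*+V\Psi_*$, which is equivalent to \eqref{ch5cor71a}.

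Uniqueness of the whole triplet then follows by reading the construction backwards: $\Phi_*$ is forced by \eqref{ch5cor71b}, and once $\Phi_*$ is fixed, the pair $(f_*,\Psi_*)$ is uniquely determined by Lemma \ref{ch5L3}. For the continuity of $\mathcal{C}_*$, I would exhibit it as a composition of bounded maps,
\begin{equation*}
\mathcal{C}_*(\mathcal{F}_0,\mathcal{F}_1)^{\top} = \bigl(\,\mathcal{C}\bigl(\mathcal{F}_0+W(\gamma^{+}\mathcal{F}_0-\mathcal{F}_1)\bigr),\ \gamma^{+}\mathcal{F}_0-\mathcal{F}_1\,\bigr),
\end{equation*}
where the first argument abbreviates the pair $(f_*,\Psi_*)$ and the last entry is $\Phi_*$; here $\gamma^{+}$, $W$ and $\mathcal{C}$ are all linear and bounded by the trace theorem, Corollary \ref{ch5thmapVWPRc} and Lemma \ref{ch5L3}, respectively, so $\mathcal{C}_*$ is linear and bounded.

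I do not expect a genuine obstacle here, since the argument is essentially a one-step reduction to the already-established Lemma \ref{ch5L3}. The only point requiring care is verifying that the modified right-hand side $\widehat{\mathcal{F}}$ remains in $\mathcal{H}^{1,0}(\Omega;\mathcal{A})$ so that Lemma \ref{ch5L3} is applicable, which is precisely what the mapping property of $W$ recorded in Corollary \ref{ch5thmapVWPRc} supplies.
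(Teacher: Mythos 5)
Your proposal is correct and follows essentially the same route as the paper: set $\Phi_{*}=\gamma^{+}\mathcal{F}_{0}-\mathcal{F}_{1}$ and apply Lemma \ref{ch5L3} to $\mathcal{F}_{0}+W\Phi_{*}$, with uniqueness reduced to the uniqueness part of that lemma (the paper phrases this via the homogeneous case, which is the same argument by linearity). Your explicit verification that $\mathcal{F}_{0}+W\Phi_{*}\in\mathcal{H}^{1,0}(\Omega;\mathcal{A})$ via Corollary \ref{ch5thmapVWPRc} is a detail the paper leaves implicit.
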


\begin{proof}
Taking $\Phi_{*} = \gamma^{+}\mathcal{F}_{0}-\mathcal{F}_{1}$ and applying the previous lemma to $\mathcal{F}_{*} = \mathcal{F}_{0} + W\Phi_{*}$ we prove existence of the representation \eqref{ch5cor71a} and \eqref{ch5cor71b}. The uniqueness follows from the homogenenous case when $\mathcal{F}_{0}=\mathcal{F}_{1}=0$. Then, \eqref{ch5cor71b} implies $\Phi_{*}=0$ and consequently, by $\eqref{ch5cor71a}$ and Lemma \ref{ch5L3}, we get $\Psi_{*}=0$ and $f_{*}=0$.
\end{proof}

We are ready to prove one of the main results for the invertibility of the matrix operator of the BDIES (M12). 

\begin{theorem}If conditions \eqref{ch5conda(x)2} and \eqref{ch5conda(x)3} hold, then the following operator is continuous and continuously invertible:
\begin{align}
\mathcal{M}^{12}&:\mathbb{H}\rightarrow\mathbb{F}^{12}\label{ch5invM12RL}
\end{align}
\end{theorem}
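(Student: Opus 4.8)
The plan is to capitalise on the fact that the continuity of $\mathcal{M}^{12}:\mathbb{H}\to\mathbb{F}^{12}$ has already been recorded, so that the whole task reduces to showing that $\mathcal{M}^{12}$ is a bijection; the continuity of $(\mathcal{M}^{12})^{-1}$ then comes for free from the Banach bounded inverse theorem, since $\mathbb{H}$ and $\mathbb{F}^{12}$ are Hilbert (hence Banach) spaces and a continuous linear bijection between Banach spaces has continuous inverse. I would therefore split the argument into injectivity and surjectivity, leaning on the unique solvability of the mixed BVP (Theorem~\ref{ch5thinv0}), the representation Corollary~\ref{ch5cor71}, and the auxiliary Lemmas~\ref{ch5L1} and~\ref{ch5L2}.

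For injectivity I would set the right-hand side to zero and essentially rerun the uniqueness argument of Theorem~\ref{ch5equivalence}(ii). Applying Lemma~\ref{ch5L1} to the homogeneous first equation \eqref{ch5SM12v} gives $u\in\mathcal{H}^{1,0}(\Omega;\mathcal{A})$ with $\mathcal{A}u=0$ together with the identity $V(\psi-T^{+}u)-W(\phi-\gamma^{+}u)=0$. Taking the trace of \eqref{ch5SM12v} and subtracting the homogeneous \eqref{ch5SM12g} yields $\phi=\gamma^{+}u$ on $S$; since $\phi\in\widetilde H^{1/2}(S_{N})$ this forces $\gamma^{+}u=0$ on $S_{D}$. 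The auxiliary identity then collapses to $V(\psi-T^{+}u)=0$, so Lemma~\ref{ch5L2} gives $\psi=T^{+}u$, and $\psi\in\widetilde H^{-1/2}(S_{D})$ forces $T^{+}u=0$ on $S_{N}$. Thus $u$ solves the homogeneous mixed BVP, and Theorem~\ref{ch5thinv0} yields $u=0$, whence $\phi=\gamma^{+}u=0$ and $\psi=T^{+}u=0$; that is, $\mathcal{M}^{12}$ is injective.

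For surjectivity I would take an arbitrary $(\mathcal{F}_{0},\mathcal{F}_{1})\in\mathbb{F}^{12}$ and invoke Corollary~\ref{ch5cor71} to obtain $(f_{*},\Psi_{*},\Phi_{*})\in L^{2}(\omega;\Omega)\times H^{-1/2}(S)\times H^{1/2}(S)$ satisfying \eqref{ch5cor71a}--\eqref{ch5cor71b}. I would then solve, via Theorem~\ref{ch5thinv0}, the mixed BVP with source $f_{*}$, Dirichlet datum $r_{S_{D}}\Phi_{*}$ and Neumann datum $r_{S_{N}}\Psi_{*}$, obtaining a unique $u\in\mathcal{H}^{1,0}(\Omega;\mathcal{A})$, and set $\phi:=\gamma^{+}u-\Phi_{*}$ and $\psi:=T^{+}u-\Psi_{*}$. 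The boundary conditions guarantee $\phi=0$ on $S_{D}$ and $\psi=0$ on $S_{N}$, placing $(u,\psi,\phi)^{\top}\in\mathbb{H}$. Substituting $\gamma^{+}u=\phi+\Phi_{*}$ and $T^{+}u=\psi+\Psi_{*}$ into the third Green identity \eqref{ch53GV} and its trace \eqref{ch53GG} (using the jump relations $\gamma^{+}V=\mathcal{V}$ and $\gamma^{+}W=-\tfrac12 I+\mathcal{W}$ inherited from $V_{\Delta},W_{\Delta}$ through \eqref{ch4relSL} and \eqref{ch4relDL}, and consistent with the trace of \eqref{ch5green3}), one checks that the two BDIE equations reproduce exactly $\mathcal{F}_{0}$ and $\gamma^{+}\mathcal{F}_{0}-\Phi_{*}=\mathcal{F}_{1}$; hence $\mathcal{M}^{12}(u,\psi,\phi)^{\top}=(\mathcal{F}_{0},\mathcal{F}_{1})^{\top}$.

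I expect the surjectivity verification to be the delicate step: it requires careful bookkeeping to confirm that the trace of the representation \eqref{ch5cor71a}, reassembled through the potential jump relations, matches $\mathcal{F}_{1}$ as prescribed by \eqref{ch5cor71b}, and that the support constraints genuinely place $\phi,\psi$ in the segregated spaces $\widetilde H^{1/2}(S_{N})$ and $\widetilde H^{-1/2}(S_{D})$ rather than merely in $H^{\pm1/2}(S)$. Everything else — injectivity and the continuity of the inverse — is a routine assembly of results already in hand.
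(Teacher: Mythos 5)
Your proposal is correct and takes essentially the same route as the paper: both represent an arbitrary right-hand side of $\mathbb{F}^{12}$ via Corollary~\ref{ch5cor71}, solve the mixed BVP with data $(f_{*}, r_{S_{D}}\Phi_{*}, r_{S_{N}}\Psi_{*})$ using Theorem~\ref{ch5thinv0}, set $\psi = T^{+}u - \Psi_{*}$, $\phi = \gamma^{+}u - \Phi_{*}$, and obtain injectivity from the uniqueness part of the equivalence Theorem~\ref{ch5equivalence} (which you simply re-derive inline via Lemmas~\ref{ch5L1} and~\ref{ch5L2}). The only cosmetic difference is at the end: the paper gets continuity of the inverse constructively, since the explicit inverse is a composition of the continuous operators $\mathcal{C}_{*}$, $\mathcal{A}_{M}^{-1}$, $\gamma^{+}$ and $T^{+}$, whereas you invoke the bounded inverse theorem on the Banach spaces $\mathbb{H}$ and $\mathbb{F}^{12}$.
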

\begin{proof}
In order to prove the invertibility of the operator $\mathcal{M}^{12}:\mathbb{H}\longrightarrow \mathbb{F}^{12}$, we apply the Corollary \ref{ch5cor71} to any right-hand side $\mathcal{F}^{12}\in \mathbb{F}^{12}$ of the equation $\mathcal{M}^{12}\mathcal{U}=\mathcal{F}^{12}$. Thus, $\mathcal{F}^{12}$ can be uniquely represented as $(f_{*}
, \Psi_{*}, \Phi_{*})^{\top} = \mathcal{C}_{*}\mathcal{F}^{12}$ as in \eqref{ch5cor71a}-\eqref{ch5cor71b} where $\mathcal{C}_{*}:\mathbb{F}^{12}\longrightarrow L^{2}(\omega;\Omega)\times H^{-1/2}(S) \times H^{1/2}(S)$ is continuous. 

In virtue of the equivalence theorem for the system (M12), Theorem \ref{ch5equivalence},  and the invertibility theorem for the boundary value problem with mixed boundary conditions, Theorem \ref{ch5thinv0}, the matrix equation $\mathcal{M}^{12}\mathcal{U}=\mathcal{F}^{12}$ has a solution $\mathcal{U} = (\mathcal{M}^{12})^{-1}\mathcal{F}^{12}$
where the operator $(\mathcal{M}^{12})^{-1}$, is given by expressions 
\begin{equation}
u = \mathcal{A}^{-1}_{M}[f_{*}, r_{S_{D}}\Phi_{*}, r_{S_{N}}\Psi_{*}], \quad \psi = T^{+}u -\Psi_{*}, \quad \phi= \gamma^{+}u - \Phi_{*},
\end{equation} where $(f_{*}
, \Psi_{*}, \Phi_{*})^{\top} = \mathcal{C}_{*}\mathcal{F}^{12}$. Consequently, the operator $(\mathcal{M}^{12})^{-1}$  is a continuous right inverse to the operator \eqref{ch5invM12RL}. Moreover, the operator $(\mathcal{M}^{12})^{-1}$  results to be a double sided inverse in virtue of the injectivity implied by Theorem \ref{ch5equivalence}.
\end{proof}

\section{Fredholm properties and Invertibility}
In this section, we are going to benefit from the compactness properties of the operator $\mathcal{R}$ to prove invertibility of the operator $\mathcal{M}^{12}: \mathbb{X} \rightarrow \mathbb{Y}^{12}$. This invertibility result is more general than the one presented in the previous section. The price to pay is imposing an additional condition on the variable coefficient. 

Unlike as in the bounded case, see  similar to \cite[Section 7.2]{exterior}, the Rellich compact embedding theorem cannot be applied as $\Omega$ is a bounded domain. Still, we can overcome this obstacle by decomposing the operator $\mathcal{R}$ into the sum of two operators: one which can be made arbitrarily small and the other one will be compact. Then, we shall simply make use of the Fredholm alternative to prove the invertibility of the matrix operator that defines the (M12) BDIES. However, we can only split the operator $\mathcal{R}$ if the PDE satisfies the additional condition
\begin{equation}\label{ch5conda(x)4}
\lim_{\vert x\vert \rightarrow \infty} \omega(x)\nabla a(x) = 0.
\end{equation}

\begin{lemma} Let conditions \eqref{ch5conda(x)2} and \eqref{ch5conda(x)4} hold. Then, for any $\epsilon>0$ the operator $\mathcal{R}$ can be represented as $\mathcal{R}=\mathcal{R}_{s} + \mathcal{R}_{c}$, where $\parallel \mathcal{R}_{s} \parallel_{\mathcal{H}^{1}(\Omega)}< \epsilon$, while $\mathcal{R}_{c}: \mathcal{H}^{1}(\Omega) \rightarrow \mathcal{H}^{1}(\Omega)$ is compact. 
\end{lemma}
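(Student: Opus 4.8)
The plan is to exploit the divergence form of the remainder operator together with a spatial cut-off that isolates the region where the coefficient fails to decay. Recall from relation \eqref{ch4relR} (and its reformulation in the proof of Theorem \ref{ch5thmapPPRc}) that, on $\mathbb{R}^{3}$, the remainder acts as $\mathcal{R}\rho = -\nabla\cdot\mathcal{P}_{\Delta}(\rho\,\nabla\ln a)$, where $\rho$ is understood to be extended by zero outside $\Omega$. I would fix a smooth cut-off $\chi_{R}\in\mathcal{D}(\mathbb{R}^{3})$ with $\chi_{R}\equiv 1$ on $\lbrace|x|\le R\rbrace$ and $\chi_{R}\equiv 0$ on $\lbrace|x|\ge 2R\rbrace$, taking $R$ large enough that $S\subset\lbrace|x|<R\rbrace$, and split the multiplier $\nabla\ln a = \chi_{R}\nabla\ln a + (1-\chi_{R})\nabla\ln a$. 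This induces the decomposition $\mathcal{R}=\mathcal{R}_{c}+\mathcal{R}_{s}$ with
\[
\mathcal{R}_{c}\rho := -\nabla\cdot\mathcal{P}_{\Delta}(\rho\,\chi_{R}\nabla\ln a),\qquad \mathcal{R}_{s}\rho := -\nabla\cdot\mathcal{P}_{\Delta}(\rho\,(1-\chi_{R})\nabla\ln a).
\]

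For the small part $\mathcal{R}_{s}$, I would estimate directly using the continuity of $\mathbf{P}_{\Delta}:\mathcal{H}^{-1}(\mathbb{R}^{3})\to\mathcal{H}^{1}(\mathbb{R}^{3})$ from Theorem \ref{ch5thmapPPRc}, the elementary bound $\|\nabla\cdot\mathbf{F}\|_{\mathcal{H}^{-1}(\mathbb{R}^{3})}\le C\|\mathbf{F}\|_{L^{2}(\mathbb{R}^{3})}$, and the multiplier estimate
\[
\|\rho\,(1-\chi_{R})\nabla\ln a\|_{L^{2}(\mathbb{R}^{3})}\le \Big(\sup_{|x|\ge R}|\omega\nabla\ln a|\Big)\,\|\omega^{-1}\rho\|_{L^{2}(\Omega)}\le \frac{1}{C_{1}}\Big(\sup_{|x|\ge R}|\omega\nabla a|\Big)\,\|\rho\|_{\mathcal{H}^{1}(\Omega)}.
\]
Here I use $a\ge C_{1}$ and the continuous embedding $\mathcal{H}^{1}(\Omega)\hookrightarrow L^{2}(\omega^{-1};\Omega)$, which is immediate from the definition of the $\mathcal{H}^{1}(\Omega)$-norm. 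Since condition \eqref{ch5conda(x)4} forces $\sup_{|x|\ge R}|\omega\nabla a|\to 0$ as $R\to\infty$, the operator norm of $\mathcal{R}_{s}$ on $\mathcal{H}^{1}(\Omega)$ is bounded by $C\,C_{1}^{-1}\sup_{|x|\ge R}|\omega\nabla a|$, which can be made smaller than any prescribed $\epsilon$ by choosing $R$ large.

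For the compact part $\mathcal{R}_{c}$, the coefficient $\chi_{R}\nabla\ln a$ is supported in the bounded ball $B_{2R}:=\lbrace|x|<2R\rbrace$, so $\mathcal{R}_{c}\rho$ depends on $\rho$ only through its restriction to the bounded Lipschitz domain $\Omega\cap B_{2R}$. I would therefore factor $\mathcal{R}_{c}$ as
\[
\mathcal{H}^{1}(\Omega)\xrightarrow{\;r\;}H^{1}(\Omega\cap B_{2R})\hookrightarrow L^{2}(\Omega\cap B_{2R})\xrightarrow{\;\cdot\,\chi_{R}\nabla\ln a\;}L^{2}(\mathbb{R}^{3})\xrightarrow{\;-\nabla\cdot\mathcal{P}_{\Delta}\;}\mathcal{H}^{1}(\Omega),
\]
where the restriction $r$ is continuous (a function in $\mathcal{H}^{1}(\Omega)$ restricts to $H^{1}$ on any bounded subdomain, as recalled earlier), the multiplication by the bounded field $\chi_{R}\nabla\ln a$ is continuous because $\nabla\ln a\in L^{\infty}$ under Condition \eqref{ch5conda(x)2}, and the final map, which sends a compactly supported $L^{2}$ field $\mathbf{F}$ to $-\mathbf{P}_{\Delta}(\nabla\cdot\mathbf{F})\in\mathcal{H}^{1}$, is continuous by the argument of Theorem \ref{ch5thmapPPRc}. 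The decisive link is the middle embedding $H^{1}(\Omega\cap B_{2R})\hookrightarrow L^{2}(\Omega\cap B_{2R})$, which is compact by the Rellich–Kondrachov theorem since $\Omega\cap B_{2R}$ is bounded. A composition of continuous operators with a compact one is compact, hence $\mathcal{R}_{c}:\mathcal{H}^{1}(\Omega)\to\mathcal{H}^{1}(\Omega)$ is compact.

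The main obstacle is precisely this compactness step: on the unbounded domain $\Omega$ the global embedding $\mathcal{H}^{1}(\Omega)\hookrightarrow L^{2}(\omega^{-1};\Omega)$ is \emph{not} compact, so Rellich cannot be invoked directly as in the bounded case. The cut-off resolves this by confining the non-negligible part of the coefficient to the bounded region $B_{2R}$, where Rellich does apply, while condition \eqref{ch5conda(x)4} guarantees that the discarded tail yields an operator of arbitrarily small norm. This is exactly the point at which the extra decay hypothesis \eqref{ch5conda(x)4} becomes indispensable, and it is the only place where it is used.
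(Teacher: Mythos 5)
Your proposal is correct and takes essentially the same approach as the paper: since $\rho\,\chi_{R}\nabla\ln a=(\chi_{R}\rho)\,\nabla\ln a$, your splitting of the coefficient coincides with the paper's decomposition $\mathcal{R}_{c}g:=\mathcal{R}(\chi g)$, $\mathcal{R}_{s}g:=\mathcal{R}((1-\chi)g)$, and both your small-norm estimate (via $\mathcal{H}^{1}(\Omega)\hookrightarrow L^{2}(\omega^{-1};\Omega)$ and condition \eqref{ch5conda(x)4}) and your compactness argument (restriction to the bounded domain $\Omega\cap B_{2R}$ followed by Rellich) mirror the paper's proof. If anything, your factorization through $H^{1}(\Omega\cap B_{2R})\hookrightarrow L^{2}(\Omega\cap B_{2R})$ states the Rellich step more precisely than the paper does.
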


\begin{proof}
Let $B(0,r)$ be the ball centered at $0$ with radius $r$ big enough such that $S\subset B_{r}$. Furthermore, let $\chi\in \mathcal{D}(\mathbb{R}^{3})$ be a cut-off function such that $\chi=1$ in $S\subset B_{r}$, $\chi = 0$ in $\mathbb{R}^{3}\smallsetminus B_{2r}$ and $0\leq \chi(x) \leq 1$ in $\mathbb{R}^{3}$. Let us define by $\mathcal{R}_{c}g:=\mathcal{R}(\chi g)$, $\mathcal{R}_{s}g:=\mathcal{R}((1-\chi)g)$. 

We will prove first that the norm of $\mathcal{R}_{s}$ can be made infinitely small. Let $g\in \mathcal{H}^{1}(\Omega)$, then
\begin{align*}
 &\parallel \mathcal{R}_{s}g \parallel_{\mathcal{H}^{1}(\Omega)}= \parallel \sum_{i=1}^{3}\mathcal{P}_{\Delta}\left[ \dfrac{\partial}{\partial x_{i}}\left(\sum_{i=1}^{3}\dfrac{\partial(\ln a)}{\partial x_{i}}(1-\chi)g\right)\right]\parallel_{\mathcal{H}^{1}(\Omega)}\leq k \parallel \mathcal{P}_{\Delta} \parallel_{\widetilde{\mathcal{H}}^{-1}(\Omega)},\\ 
&\text{with}\quad k:=\sum_{i=1}^{3}\parallel \dfrac{\partial}{\partial x_{i}}\left(\sum_{i=1}^{3}\dfrac{\partial(\ln a)}{\partial x_{i}}(1-\chi)g\right)\parallel_{\widetilde{\mathcal{H}}^{-1}(\Omega)}\,\, \leq \sum_{i=1}^{3} \parallel \dfrac{\partial(\ln a)}{\partial x_{i}}(1-\chi)g\parallel_{{L}^{2}(\Omega)}\\
 & \hspace{4em} \leq 3 \parallel g \parallel_{L^{2}(\omega^{-1};\Omega)}\parallel \omega \nabla a\parallel_{L^{\infty}(\mathbb{R}^{3}\smallsetminus B_{r})}\,\,\leq\,\, 3 \parallel g \parallel_{\mathcal{H}^{1}(\Omega)}\parallel \omega \nabla a\parallel_{L^{\infty}(\mathbb{R}^{3}\smallsetminus B_{r})}.
\end{align*} 
Consequently, we have the following estimate:
\begin{align*}
\parallel \mathcal{R}_{s}g \parallel_{\mathcal{H}^{1}(\Omega)} &\leq 3 \parallel g \parallel_{\mathcal{H}^{1}(\Omega)}\parallel \omega \nabla a\parallel_{L^{\infty}(\mathbb{R}^{3}\smallsetminus B_{r})}\parallel \mathcal{P}_{\Delta} \parallel_{\widetilde{\mathcal{H}}^{-1}(\Omega)}.
\end{align*}

Using the previous estimate is easy to see that when $\epsilon\rightarrow +\infty$ the norm $\parallel \mathcal{R}_{s}g \parallel_{\mathcal{H}^{1}(\Omega)}$ tends to $0$. Hence, the norm of the operator $\mathcal{R}_{s}$ can be made arbitrarily small. 

To prove the compactness of the operator $\mathcal{R}_{c}g:=\mathcal{R}(\chi g)$, we recall that ${supp(\chi)\subset \bar{B}(0,2r)}$. Then, one can express $\mathcal{R}_{c}g:= \mathcal{R}_{\Omega_{r}}([\chi g\vert_{\Omega_{r}}])$ where the operator $\mathcal{R}$ is defined now over $\Omega_{r}:=\Omega\cap B_{2r}$ which is a bounded domain. As the restriction operator $\vert_{\Omega_{r}}:\mathcal{H}^{1}(\Omega)\longrightarrow \mathcal{H}^{1}(\Omega_{r})$ is continuous, in virtue of Theorem \ref{ch5thmapPPRc}, the operator $\mathcal{R}_{c}g: L^{2}(\Omega_{r})\longrightarrow \mathcal{H}^{1}(\Omega_{r})$ is also continuous. Due to the boundedness of $\Omega_{r}$, we have $\mathcal{H}^{1}(\Omega_{r})= H^{1}(\Omega_{r})$ and thus the compactness of $\mathcal{R}_{c}g$ follows from the Rellich Theorem applied to the embedding $L^{2}(\Omega_{r})\subset H^{1}(\Omega_{r})$.
\end{proof}

\begin{corollary} Let conditions \eqref{ch5conda(x)2} and \eqref{ch5conda(x)4} hold. Then, the operator ${I+\mathcal{R}:\mathcal{H}^{1}(\Omega) \rightarrow \mathcal{H}^{1}(\Omega)}$ is Fredholm with zero index. 
\end{corollary}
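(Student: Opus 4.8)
The plan is to deduce the Fredholm property of $I+\mathcal{R}$ directly from the decomposition $\mathcal{R}=\mathcal{R}_{s}+\mathcal{R}_{c}$ furnished by the preceding lemma, combined with standard perturbation theory for Fredholm operators. The key observation is that the index of a Fredholm operator is invariant both under compact perturbations and under perturbations of sufficiently small operator norm, so I would construct $I+\mathcal{R}$ as a small-norm perturbation of an invertible operator, modulo a compact term.

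First I would invoke the lemma with a specific choice of $\epsilon$: fix $\epsilon<1$ (say $\epsilon=1/2$) and obtain the splitting $\mathcal{R}=\mathcal{R}_{s}+\mathcal{R}_{c}$ with $\parallel\mathcal{R}_{s}\parallel_{\mathcal{H}^{1}(\Omega)}<\epsilon<1$ and $\mathcal{R}_{c}:\mathcal{H}^{1}(\Omega)\to\mathcal{H}^{1}(\Omega)$ compact. I would then write
\begin{equation*}
I+\mathcal{R}=(I+\mathcal{R}_{s})+\mathcal{R}_{c}.
\end{equation*}
Since $\parallel\mathcal{R}_{s}\parallel<1$, the Neumann series argument shows that $I+\mathcal{R}_{s}$ is invertible on $\mathcal{H}^{1}(\Omega)$; in particular it is Fredholm with index zero. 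The operator $\mathcal{R}_{c}$ being compact, $I+\mathcal{R}=(I+\mathcal{R}_{s})+\mathcal{R}_{c}$ is a compact perturbation of an index-zero Fredholm operator, and therefore is itself Fredholm with the same index, namely zero. This is the entire argument, and it relies only on two textbook facts: that Fredholm operators with index zero form a set stable under compact perturbation, and that a bounded operator within unit-norm distance of the identity is invertible.

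Because the proof reduces essentially to citing these perturbation results, there is no serious analytical obstacle here once the lemma is in hand. The real content has already been discharged in the preceding lemma, whose proof is where the genuine difficulty lies: establishing that the ``small'' part $\mathcal{R}_{s}$ truly has small norm requires the decay condition \eqref{ch5conda(x)4}, namely $\omega\nabla a\to 0$ at infinity, to force $\parallel\omega\nabla a\parallel_{L^{\infty}(\mathbb{R}^{3}\smallsetminus B_{r})}$ to zero as $r\to\infty$, while the compactness of $\mathcal{R}_{c}$ depends on reducing it to a bounded subdomain $\Omega_{r}$ where the Rellich theorem applies. The only point requiring mild care in the corollary itself is to confirm that the two perturbation principles are being applied on the same space $\mathcal{H}^{1}(\Omega)$ with consistent mapping properties, which is immediate from the lemma's statement.
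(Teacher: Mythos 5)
Your proposal is correct and follows essentially the same route as the paper: both invoke the preceding lemma with $\epsilon<1$ to write $I+\mathcal{R}=(I+\mathcal{R}_{s})+\mathcal{R}_{c}$, observe that $I+\mathcal{R}_{s}$ is invertible by the Neumann series, and conclude via stability of the zero index under compact perturbation. Your write-up is in fact slightly cleaner than the paper's, which garbles the final sentence (it says $I+\mathcal{R}_{s}$ is a compact perturbation of $I+\mathcal{R}$ rather than the other way around), but the argument is identical.
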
   

\begin{proof}
Using the previous Lemma, we have $\mathcal{R}=\mathcal{R}_{s} + \mathcal{R}_{c}$ so $\parallel \mathcal{R}_{s}\parallel < 1$ hence $I+\mathcal{R}_{s}$ is invertible. On the other hand $\mathcal{R}_{c}$ is compact and hence $I + \mathcal{R}_{s}$ a compact perturbation of the operator $I+\mathcal{R}$, from where it follows the result.
\end{proof}

\begin{theorem}
If conditions \eqref{ch5conda(x)2}, \eqref{ch5conda(x)3} and \eqref{ch5conda(x)4} hold, then the operator 
\begin{equation}
\mathcal{M}^{12}:\mathbb{X}\rightarrow \mathbb{Y}^{12},
\end{equation}
is continuously invertible. 
\end{theorem}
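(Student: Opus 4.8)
The plan is to establish the invertibility through the Fredholm alternative: first I would show that $\mathcal{M}^{12}:\mathbb{X}\to\mathbb{Y}^{12}$ is Fredholm of index zero, then prove that it is injective, and finally invoke the bounded inverse theorem. The genuinely new difficulty compared with the bounded-domain setting is that $\mathcal{R}$ is no longer compact on $\mathcal{H}^{1}(\Omega)$, so instead of pure compactness I would exploit the splitting $\mathcal{R}=\mathcal{R}_{s}+\mathcal{R}_{c}$ provided by the preceding lemma, where $\|\mathcal{R}_{s}\|_{\mathcal{H}^{1}(\Omega)}$ can be made arbitrarily small and $\mathcal{R}_{c}$ is compact.

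For the Fredholm property I would decompose the matrix operator as $\mathcal{M}^{12}=\mathcal{M}^{12}_{s}+\mathcal{M}^{12}_{c}$, with
\[
\mathcal{M}^{12}_{c}=\left[\begin{array}{ccc}\mathcal{R}_{c} & 0 & 0\\ \gamma^{+}\mathcal{R}_{c} & 0 & 0\end{array}\right],\qquad
\mathcal{M}^{12}_{s}=\left[\begin{array}{ccc} I+\mathcal{R}_{s} & -V & W\\ \gamma^{+}\mathcal{R}_{s} & -\mathcal{V} & \frac{1}{2}I+\mathcal{W}\end{array}\right].
\]
The operator $\mathcal{M}^{12}_{c}$ is compact because $\mathcal{R}_{c}:\mathcal{H}^{1}(\Omega)\to\mathcal{H}^{1}(\Omega)$ is compact and $\gamma^{+}$ is bounded, so $\gamma^{+}\mathcal{R}_{c}$ is compact as well. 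It then remains to show that $\mathcal{M}^{12}_{s}$ is invertible. Writing $\mathcal{M}^{12}_{s}=\mathcal{M}^{12}_{0}+E_{s}$, where $\mathcal{M}^{12}_{0}$ is obtained by deleting the two $\mathcal{R}_{s}$ entries and $E_{s}$ collects $\mathcal{R}_{s}$ and $\gamma^{+}\mathcal{R}_{s}$, the operator $E_{s}:\mathbb{X}\to\mathbb{Y}^{12}$ has norm of order $\epsilon$ by the preceding lemma and the boundedness of $\gamma^{+}$. Hence, once $\mathcal{M}^{12}_{0}$ is shown to be invertible, a sufficiently small $\epsilon$ makes $I+(\mathcal{M}^{12}_{0})^{-1}E_{s}$ invertible by a Neumann-series argument, so $\mathcal{M}^{12}_{s}$ is invertible, and $\mathcal{M}^{12}=\mathcal{M}^{12}_{s}+\mathcal{M}^{12}_{c}$ is a compact perturbation of an invertible operator, therefore Fredholm of index zero.

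The invertibility of the reference operator $\mathcal{M}^{12}_{0}$ is the main obstacle. Since $u$ enters its first row only through the identity, $\mathcal{M}^{12}_{0}$ is block-triangular: given $(G_{0},g_{1})\in\mathbb{Y}^{12}$ one first solves the boundary equation $-\mathcal{V}\psi+(\frac{1}{2}I+\mathcal{W})\phi=g_{1}$ for $(\psi,\phi)\in\widetilde{H}^{-1/2}(S_{D})\times\widetilde{H}^{1/2}(S_{N})$, and then recovers $u=G_{0}+V\psi-W\phi\in\mathcal{H}^{1}(\Omega)$ using the mapping properties of $V,W$ from Theorem \ref{ch5thmapVW}. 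Thus invertibility of $\mathcal{M}^{12}_{0}$ reduces to invertibility of the mixed boundary integral operator $[-\mathcal{V},\ \frac{1}{2}I+\mathcal{W}]$, which I would obtain from the relations \eqref{ch4relDVSL} and \eqref{ch4relDVDL} expressing $\mathcal{V},\mathcal{W}$ through their Laplace counterparts (with $a$, $1/a$ and $\partial_{n}\ln a$ acting as multipliers by Remark \ref{ch5remmult}), the invertibility of $\mathcal{V}_{\Delta}$ (\cite[Corollary 8.13]{mclean}, already used in Lemma \ref{ch5L2}), and the classical solvability theory of the mixed-problem boundary integral equations; this is the exterior-domain counterpart of the reference-operator invertibility used in the bounded case.

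Finally, for injectivity I would take $\mathcal{X}=(u,\psi,\phi)^{\top}\in\mathbb{X}$ with $\mathcal{M}^{12}\mathcal{X}=0$. The first equation is exactly the homogeneous relation \eqref{ch5G3ind} with $f=0$, $\Psi=\psi$ and $\Phi=\phi$, so Lemma \ref{ch5L1} upgrades $u$ to $\mathcal{H}^{1,0}(\Omega;\mathcal{A})$ with $\mathcal{A}u=0$ and yields \eqref{ch5lema1.0}; in particular the triple now lies in $\mathbb{H}$. Taking the trace of the first equation and subtracting the second gives $\phi=\gamma^{+}u$, whence the $W$-term in \eqref{ch5lema1.0} drops and Lemma \ref{ch5L2} forces $\psi=T^{+}u$. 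Because $\phi\in\widetilde{H}^{1/2}(S_{N})$ and $\psi\in\widetilde{H}^{-1/2}(S_{D})$, the homogeneous conditions $r_{S_{D}}\gamma^{+}u=0$ and $r_{S_{N}}T^{+}u=0$ follow, so $u$ solves the homogeneous mixed problem and Theorem \ref{ch5thinv0} gives $u=0$, hence $\psi=\phi=0$; equivalently this is the uniqueness already contained in Theorem \ref{ch5equivalence}. A Fredholm operator of index zero with trivial kernel has trivial cokernel and is therefore surjective, and the bounded inverse theorem yields the continuity of $(\mathcal{M}^{12})^{-1}$.
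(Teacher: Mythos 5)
Your overall strategy coincides with the paper's: split $\mathcal{R}=\mathcal{R}_{s}+\mathcal{R}_{c}$ using the preceding lemma, exhibit $\mathcal{M}^{12}$ as a compact perturbation of an invertible operator (hence Fredholm of index zero), prove injectivity, and conclude by the bounded inverse theorem. Indeed, your injectivity step is spelled out more carefully than in the paper, since you correctly note that a kernel element of $\mathcal{M}^{12}$ on $\mathbb{X}$ must first be upgraded to $\mathbb{H}$ via Lemma \ref{ch5L1} before Lemma \ref{ch5L2} and Theorem \ref{ch5thinv0} (equivalently, Theorem \ref{ch5equivalence}) can be applied. The problem is the invertibility of your reference operator: you keep $\frac{1}{2}I+\mathcal{W}$ in the boundary row, so your $\mathcal{M}^{12}_{0}$ is invertible if and only if the mixed boundary operator $\left[-\mathcal{V},\ \frac{1}{2}I+\mathcal{W}\right]:\widetilde{H}^{-1/2}(S_{D})\times\widetilde{H}^{1/2}(S_{N})\to H^{1/2}(S)$ is invertible, and this you assert rather than prove.

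The means you cite do not deliver that assertion. Relations \eqref{ch4relDVSL} and \eqref{ch4relDVDL} give
\begin{equation*}
-\mathcal{V}\psi+\left(\tfrac{1}{2}I+\mathcal{W}\right)\phi
=-\mathcal{V}_{\Delta}\left(\frac{\psi}{a}\right)+\left(\tfrac{1}{2}I+\mathcal{W}_{\Delta}\right)\phi
-\mathcal{V}_{\Delta}\left(\phi\,\frac{\partial \ln a}{\partial n}\right),
\end{equation*}
so even granting the invertibility of the constant-coefficient mixed operator $\left[-\mathcal{V}_{\Delta},\ \frac{1}{2}I+\mathcal{W}_{\Delta}\right]$ (a Costabel--Stephan type result \cite{costste} which the paper never establishes or invokes in the exterior weighted-space setting), the remaining term $-\mathcal{V}_{\Delta}(\phi\,\partial_{n}\ln a)$ is only a \emph{compact} perturbation on the smooth boundary; invertibility of $\mathcal{V}_{\Delta}$ alone \cite[Corollary 8.13]{mclean} does not help because the unknowns $\psi$ and $\phi$ are coupled through one equation on all of $S$. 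What you obtain this way is therefore Fredholmness of index zero of your boundary block, not its invertibility; you would still need an injectivity argument for it, which is essentially as hard as the theorem itself, so the proof as written is circular at its central step. The paper sidesteps exactly this trap by choosing a coarser reference operator whose second row is $[0,\,-\mathcal{V},\,\frac{1}{2}I]$ (no $\mathcal{W}$): appending the restriction to $S_{D}$ of the boundary equation yields the triangular system \eqref{ch5sysm21} with diagonal entries $I$, $\frac{1}{2}I$ and $-r_{S_{D}}\mathcal{V}$, each invertible (the last by \cite[Theorem 4.7]{carloscomp}), whence $\mathcal{M}^{12}_{0}$ is invertible by elementary back-substitution; the discarded entry $\mathcal{W}=\mathcal{W}_{\Delta}-\mathcal{V}_{\Delta}(\cdot\,\partial_{n}\ln a)$ is then placed in the compact part $\mathcal{M}^{12}_{c}$ of the perturbation, which is legitimate because on a $\mathcal{C}^{\infty}$ boundary these operators are smoothing of order one, hence compact on $H^{1/2}(S)$. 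To repair your argument, move $\mathcal{W}$ (together with $\mathcal{R}_{c}$ and $\gamma^{+}\mathcal{R}_{c}$) into the compact part and take the paper's triangular reference operator; the rest of your proof then goes through unchanged.
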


\begin{proof}
Let \[
   \mathcal{M}^{12}_{0}=
  \left[ {\begin{array}{ccc}
   I & -V & W \\
   0 & -\mathcal{V} & \dfrac{1}{2}I\\
  \end{array} } \right].
\]

Let $\mathcal{U} = (u,\psi,\phi)\in \mathbb{X}$ be a solution of the equation $\mathcal{M}_{0}^{12}\mathcal{U} = \mathcal{F}$, where ${\mathcal{F}  = (\mathcal{F}_{1}, \mathcal{F}_{2})\in \mathcal{H}^{1}(\Omega)\times H^{1/2}(S)}$. Then, $\mathcal{U}$ will also solve the following extended system
\begin{align}
u+W\phi - V\psi &= \mathcal{F}_{1}\quad\text{in}\hspace{0.5em} \Omega \nonumber ,\\
\dfrac{1}{2}\phi - \mathcal{V}\psi &=\mathcal{F}_{2}\quad\text{on}\hspace{0.5em}S,\label{ch5sysm21}\\
-r_{S_{D}}\mathcal{V}\psi &= r_{S_{D}}\mathcal{F}_{2}\quad\text{on}\hspace{0.5em}S_{D}\nonumber.
\end{align}

Furthermore, every solution of the system \eqref{ch5sysm21} will solve the equation $\mathcal{M}_{0}^{12}\mathcal{U} = \mathcal{F}$. 

The system \eqref{ch5sysm21} can be written also in matrix form as $\widetilde{\mathcal{M}}^{12}_{0}\mathcal{U}=\widetilde{\mathcal{F}}$ where $\widetilde{\mathcal{F}}$ denotes the right hand side and $\widetilde{\mathcal{M}}^{12}_{0}$ is defined as
\[
   \widetilde{\mathcal{M}}^{12}_{0}:=
  \left[ {\begin{array}{ccc}
   I & W & -V \\
   0 &  \dfrac{1}{2}I & -\mathcal{V}\\
   0 & 0  & -r_{S_{D}}\mathcal{V}\\
  \end{array} } \right].
\]

We note that the three diagonal operators:
\begin{align*}
I &: \mathcal{H}^{1}(\Omega) \longrightarrow \mathcal{H}^{1}(\Omega), \\
\dfrac{1}{2}I &: H^{1/2}(S) \longrightarrow H^{1/2}(S),\\
-r_{S_{D}}\mathcal{V} &: \widetilde{H}^{-1/2}(S_{D})\longrightarrow H^{1/2}(S_{D})
\end{align*}
are invertible, cf. \cite[Theorem 4.7]{carloscomp}. Hence, the operator $\widetilde{\mathcal{M}}^{12}_{0}$ which defines the system \eqref{ch5sysm21} is invertible. 

Now, let $\psi\in \widetilde{H}^{-1/2}(S_{D})
$ such that the third equation in the system \eqref{ch5sysm21} is satisfied. Then, solving $\phi$ from the second equation of the system, we get ${\phi=2(\mathcal{V}\psi + \mathcal{F}_{2})\in \widetilde{H}^{1/2}(S_{N})}$ from where the invertibility of the operator $\mathcal{M}^{12}_{0}$ follows.

Now, we decompose $\mathcal{M}^{12} - \mathcal{M}^{12}_{0}= \mathcal{M}^{12}_{s} + \mathcal{M}^{12}_{c}$ and we prove that $\mathcal{M}^{12}_{0}+ \mathcal{M}^{12}_{s}$ is a compact perturbation of $\mathcal{M}^{12}$. Consequently,  $\mathcal{M}^{12}$ is Fredholm with index zero. In addition, as the operator $\mathcal{M}^{12}$ is one to one, we conclude that it is also continuously invertible. 
\end{proof}

\section{Conclusions}
A new parametrix for the diffusion equation in non homogeneous media (with variable coefficient) has been analysed in this paper. Mapping properties of the corresponding parametrix based surface and volume potentials have been shown in corresponding weigthed Sobolev spaces depending on several regularity and decay conditions on the variable coefficient $a(x)$.  

A BDIES for the original BVP has been obtained. Results of equivalence between the BDIES and the BVP have been shown along with the invertibility of the matrix operator defining the BDIES using Fredholm alternative arguments overcoming the technicalities that unbounded domains present. 

Now, we have obtained an analogous system to the BDIES (M12) of \cite{exterior} with a new family of parametrices which is uniquely solvable. Hence, further investigation about the numerical advantages of using one family of parametrices over another will follow.

Further generalised can be obtained by relaxing the smoothness of the boundary to Lipschitz domains. In this case, one needs the generalised canonical conormal derivative operator defined in \cite{traces, mikhailovlipschitz}. Another possible generalisation could consider relaxing the smoothness of the coefficient, see \cite{nonsmooth}. 

\end{document}